\newcommand{\bc}{\begin{center}}
\newcommand{\ec}{\end{center}}
\newcommand{\be}{\begin{eqnarray}}
\newcommand{\ee}{\end{eqnarray}}
\newcommand{\ben}{\begin{eqnarray*}}
\newcommand{\een}{\end{eqnarray*}}
\newcommand{\Om}{{\rm\Omega}}
\newcommand{\dx}{\,dx}
\newcommand{\Rmnum}[1]{\expandafter\@slowromancap\romannumeral #1@}
\newcommand{\cE}{{\mathcal{E}_h}}
\newcommand{\cEi}{{\mathcal{E}_h^I}}
\newcommand{\cED}{{\mathcal{E}_h^D}}
\newcommand{\cEN}{{\mathcal{E}_h^N}}
\newcommand{\cTh}{{\mathcal{T}}}
\newcommand{\PcT}{{\partial\mathcal{T}_h}}
\newcommand{\R}{\mathbb{R}}
\def\S{\mathcal{S}}
\newcommand{\Q}{\boldsymbol{Q}}
\newcommand{\checkQ}{\boldsymbol{\check{Q}}}
\newcommand{\V}{\boldsymbol{V}}
\newcommand{\checkV}{\boldsymbol{\check{V}}}
\newcommand{\Qe}{\boldsymbol{\Sigma}}
\newcommand{\Z}{\boldsymbol{Z}}
\newcommand{\checkQe}{\boldsymbol{\check{\Sigma}}}
\newcommand{\Hs}{\boldsymbol{H}}
\def\undertilde#1{\mathord{\vtop{\ialign{##\crcr
$\hfil\displaystyle{#1}\hfil$\crcr\noalign{\kern1.5pt\nointerlineskip}
$\hfil\tilde{}\hfil$\crcr\noalign{\kern1.5pt}}}}}
\newcommand{\cp}{\boldsymbol{p}}  
\newcommand{\dph}{\boldsymbol{p}_h}
\newcommand{\dqh}{\boldsymbol{q}_h}
\newcommand{\du}{u_h}
\newcommand{\dv}{v_h}
\newcommand{\hp}{\hat{\boldsymbol{p}}_h}
\newcommand{\ttp}{\tilde{\boldsymbol{p}}_h}
\newcommand{\tp}{\check{\boldsymbol{p}}_h}
\newcommand{\tq}{\check{\boldsymbol{q}}_h}
\newcommand{\tu}{\check{u}_h}
\newcommand{\tv}{\check{v}_h}
\newcommand{\cu}{\boldsymbol{u}}
\newcommand{\cv}{\boldsymbol{v}}
\newcommand{\csig}{\boldsymbol{\sigma}}
\newcommand{\ctau}{\boldsymbol{\tau}}
\newcommand{\due}{\boldsymbol{u}_h}
\newcommand{\dve}{\boldsymbol{v}_h}
\newcommand{\dsig}{\boldsymbol{\sigma}_h}
\newcommand{\dtau}{\boldsymbol{\tau}_h}
\newcommand{\hsig}{\check{\boldsymbol{\sigma}}_h}
\newcommand{\htau}{\check{\boldsymbol{\tau}}_h}
\newcommand{\hue}{\check{\boldsymbol{u}}_h}
\newcommand{\hve}{\check{\boldsymbol{v}}_h}
\newcommand{\tsig}{\hat{\boldsymbol{\sigma}}_h} 
\newcommand{\tue}{\hat{\boldsymbol{u}}_h}
\newcommand{\phie}{\boldsymbol{\phi}}  
\newcommand{\psie}{\boldsymbol{\psi}}  
\newcommand{\dw}{\boldsymbol{w}_h} 
\newcommand{\Piu}{{\rm P}_h}
\newcommand{\Piut}{{\rm \tilde P}_h}
\newcommand{\Pipe}{\boldsymbol{\Pi}_h}
\newcommand{\Piue}{{\textbf{P}}_h}
\newcommand{\ep}{\boldsymbol{d}_h}
\newcommand{\epu}{\boldsymbol{e}_{\cu}}
\newcommand{\divh}{{\rm div}_h }
\newcommand{\bn}{\boldsymbol{n}}
\newcommand{\normmm}[1]{{\left\vert\kern-0.25ex\left\vert\kern-0.25ex\left\vert #1
    \right\vert\kern-0.25ex\right\vert\kern-0.25ex\right\vert}}
\newtheorem{remark}{Remark}[section]
\newtheorem{theorem}{Theorem}[section]
\newtheorem{lemma}[theorem]{Lemma}
\newtheorem{Ass}{Assumption}[section]
\theoremstyle{definition}
\numberwithin{equation}{section}
\begin{document}
\title{Superconvergence of Discontinuous Galerkin methods for Elliptic Boundary Value Problems
}
\date{\vspace{-5ex}}
%
\author{\textsc{ Limin Ma}\thanks{
Department of Mathematics, Pennsylvania State University, University Park, PA, 16802, USA. maliminpku@gmail.com
}
}

\maketitle
\begin{abstract}
In this paper, we present a unified analysis of the superconvergence property for a large class of mixed discontinuous Galerkin methods. This analysis applies to both the Poisson equation and linear elasticity problems with symmetric stress formulations. Based on this result, some locally postprocess schemes are employed to improve the accuracy of displacement by order $\min(k+1, 2)$ if polynomials of degree $k$ are employed for displacement.  Some numerical experiments are carried out to validate the theoretical results.

 \vskip 15pt

\noindent{\bf Keywords. }{superconvergence, postprocessing, discontinuous Galerkin, linear elasticity problem}

\end{abstract}

\section{Introduction and Notation}
\subsection{Introduction}
In this work, we investigate the superconvergence property and postprocess schemes of mixed discontinuous Galerkin methods for two classes of problems. One is the second order model problem
\begin{equation}\label{model:elliptic}
\left\{\begin{aligned}
c\cp - \nabla u&=0& & \text { in } \Omega, \\
\operatorname{div}\cp &=f & & \text { in } \Omega, \\
u &=0 & & \text { on } \Gamma_{D}, \\
\cp \cdot \boldsymbol{n} &=0 & & \text { on } \Gamma_{N},
\end{aligned}\right.
\end{equation}
with $\Om\subset \R^n~(n=2,3)$ and $\partial \Om=\Gamma_D\cup \Gamma_N$, $\Gamma_D\cap \Gamma_N=\emptyset$. Here $c$ is a bounded and positive definite matrix from $\mathbb{R}^n$ to $\mathbb{R}^n$, $u$ is a scalar function and $\cp$ is a vector-valued function.
The other one is the linear elasticity problem
\begin{equation}\label{model:elasticity}
\left\{
\begin{aligned}[rll]
A\csig-\epsilon (\cu)&=0\quad &\text{ in }\Om ,\\
\operatorname{div} \csig&=f\quad & \text{ in }\Om ,\\
\cu&=0\quad &\text{ on }\Gamma_D ,\\
\csig\bn&=0\quad &\text{ on }\Gamma_N,
\end{aligned}
\right.
\end{equation}
with $\Om\subset \R^n~(n=2,3)$ and $\partial \Om=\Gamma_D\cup \Gamma_N$, $\Gamma_D\cap \Gamma_N=\emptyset$. 
 Here the displacement is denoted by $\cu: \Om\rightarrow \R^n$ and the stress tensor is denoted by $\csig: \Om\rightarrow \S$, where $\S$ is the set of symmetric $n\times n$ tensors. The linearized strain tensor $\epsilon (\cu)=\frac{1}{2}(\nabla \cu+\nabla \cu^T)$. The compliance tensor $A: \S\rightarrow \S$
 \begin{equation}\label{lame}
A\csig ={1+\nu\over E}\csig -{\nu\over E}tr(\csig)I
\end{equation}
is assumed to be bounded and symmetric positive definite, where $E$ and $\nu$ are the Young's modulus and Poisson ratio of the elastic material under consideration, respectively.  

Postprocessing type of superconvergence property was discussed in literature, see \cite{bramble1989local,xu2003super,arnold1985mixed,stenberg1991postprocessing} for instance. There are two main ingredients for this kind of superconvergence. One is the superclose property of the projection of the exact solution, and the other one is an appropriate postprocess scheme  which is performed seperately on each element.  
For the scalar elliptic problem \eqref{model:elliptic}, the postprocessing type of superconvergence has been analyzed for the conforming elements, mixed element and nonconforming elements with superclose property, 
see  for instance, \cite{arnold1985mixed,bank2019superconvergent,chenhigh,Chen2013Superconvergence,Brandts1994Superconvergence,Jan2000Superconvergence,Douglas1989Superconvergence,hong2018uniform,stenberg1991postprocessing,hong2012discontinuous,hong2019unified} 
and the references therein.
For some nonconforming elements, the lack of  this superclose property of the canonical interpolation leads to the difficulty in analyzing the superconvergence result.
Recently, a superconvergence of two nonconforming elements in this case was analyzed in \cite{Hu2016Superconvergence,hu2018optimal} by employing the superclose property of a related mixed element. 
The superconvergence property was also analyzed for 
various discontinuous Galerkin methods \cite{cockburn2009superconvergent,cockburn2012conditions,xie2009numerical}.
 For the linear elasticity problem \eqref{model:elasticity}, the strong symmetry of the stress tensor causes a substantial additional difficulty for developing stable mixed elements for elasticity problem \cite{arnold2002mixed,arnold2008finite,hu2014family,hu2015family,hu2016finite,hu2014finite,hong2016robust}.  
The mixed methods in \cite{cockburn2010new, gopalakrishnan2012second} and hybridizable  discontinuous Galerkin  methods in \cite{cockburn2013superconvergent} imposed weak symmetry on the stress tensor, and achieved optimal convergence for stress and superconvergence for displacement by post processing. 
A postprocessing schemes was analyzed for a mixed element methods solving the linear elasticity problems   \eqref{model:elasticity} in \cite{stenberg1988family}. A superconvergent hybridizable  discontinuous Galerkin method with strong symmetry was analyzed in \cite{cockburndevising}.

In this paper, a unified superconvergence analysis of a large class of 
mixed discontinuous Galerkin methods 
is presented for both the scalar elliptic problem \eqref{model:elliptic} and linear elasticity problem \eqref{model:elasticity} in \cite{XGHong,hong2020extended} . 
Mixed discontinuous Galerkin methods employ discontinuous polynomials with degree $k$ and $k+1$ for the displacement $\cu$ and the stress $\csig$, respectively.
Thanks to a conforming projection and the corresponding commuting diagram, the $L^2$ projections of $u$  for \eqref{model:elliptic} and  $\cu$ for \eqref{model:elasticity} admit a superclose property. 
Note that this property can be advantageously exploited  to design a high accuracy approximation to $u$ and $\cu$. Indeed, following the idea in \cite{arnold1985mixed,Douglas1989Superconvergence,stenberg1991postprocessing,cockburn2009superconvergent,cockburn2012conditions,xie2009numerical}, we propose four postprocessing schemes  for the mixed discontinuous Galerkin method in \cite{XGHong} and get new approximations to $u$ with high accuracy for second order scalar elliptic problem \eqref{model:elliptic}.  For some special choices of parameters, the mixed discontinuous Galerkin method in \cite{XGHong} is hybridizable and leads to a much smaller system. 
The variable $\hp$ in the hybridized formulation is an approximation to $\cp$ on edges.
This $\hp$, together with the aforementioned postprocessing scheme, gives rise to a superconvergent approximation to the solution $u$ of \eqref{model:elliptic}. 
For the elasticity problem \eqref{model:elasticity}, 
a post processing scheme in \cite{stenberg1988family} was analyzed  for a mixed element method. In this paper, a similar  scheme is proposed for the discontinuous Galerkin method with symmetric stress in \cite{hong2020extended}. 
The proposed postprocessing scheme is analyzed to admit a desirable superconvergence property when $k\ge n$, which improves the accuracy of displacement by order $\min(k+1, 2)$ if polynomials of degree $k$ is employed for displacement. 
The current result 
provides the first analysis for a number of new methods  \cite{cockburn2009unified,hong2019unified,MixLDGWu,hong2020extended}. The numerical tests for linear elasticity problems also indicate that there is no such conforming interpolation which  admits the commuting diagram when $k<n$.

The rest of the paper is organized as follows. 
Section \ref{sec:elliptic} and \ref{sec:elasticity} analyze the postprocessing schemes and the  superconvergence property for scalar elliptic problems and linear elasticity problems, respectively.   
Some numerical examples are tested in Section \ref{sec:num} to verify the theoretical results.

\subsection{Notation}

Given a nonnegative integer $m$ and a bounded domain $D\subset \mathbb{R}^n$, let $H^m(D)$, $\|\cdot\|_{m,D}$ and $|\cdot|_{m,D}$ be the usual Sobolev space, norm
and semi-norm,  respectively.  The
$L^2$-inner product on $D$ and $\partial D$ are denoted by $(\cdot,
\cdot)_{D}$ and $\langle\cdot, \cdot\rangle_{\partial D}$,
respectively. Let  $\|\cdot\|_{0,D}$ and $\|\cdot\|_{0,\partial D}$ be the norms of  $L^2(D)$ and $L^2(\partial D)$, respectively. The norms $\|\cdot\|_{m,D}$ and $|\cdot|_{m,D}$ are abbreviated as  $\|\cdot\|_{m}$ and
$|\cdot|_{m}$, respectively, when $D$ is chosen as $\Omega$. Suppose that $\Om\subset \mathbb{R}^n$ is a bounded polygonal domain covered exactly by a shape-regular partition $\cTh$ into polyhedrons.  Let  $h_K$ be the diameter of element $K\in \cTh$ and $h=\max_{K\in\cTh}h_K$.  Denote the set of all interior edges/faces of $\cTh$ by $\cEi$, and all edges/faces on boundary $\Gamma_D$ and $\Gamma_N$ by $\cED$ and $\cEN$, respectively.  Let $\cE=\cEi\cup \cED \cup \cEN$ and $h_e$ be the diameter of edge/face $e\in \cE$. For any interior edge/face $e=K^+\cap K^-$, let $\bn^i$ = $\bn|_{\partial K^i}$ be the unit outward normal vector on $\partial K^i$ with $i = +,-$.  
For $K\subset\R^n$ and any nonnegative integer $r$, let $P_r(K, \R)$ be the space of all polynomials of degree not greater than $r$ on $K$.

Throughout this paper, we shall use letter $C$, which is independent
of mesh-size $h$, stabilization parameters $\eta, \tau, \gamma$, 
to denote a generic
positive constant which may stand for different values at different
occurrences.  Following \cite{xu1992iterative}, the notations $x \lesssim y$ and $x \gtrsim y$ mean $x
\leq Cy$  and $x \geq Cy$, respectively. Denote $x\lesssim y\lesssim x$ by $x \cong y$.

\section{Scalar elliptic problems}\label{sec:elliptic}
This section analyzes the postprocessing schemes and the superconvergence result for the scalar elliptic problem \eqref{XGdiv}.

\subsection{Discontinuous Galerkin methods for scalar elliptic problems}
Consider the second order elliptic model problem \eqref{model:elliptic}. 
For any scalar-valued function $\dv$ and vector-valued function $\dqh$ that are piecewise smooth with respect to $\mathcal{T}$, let $\dv^{\pm}$ = $\dv|_{\partial K^{\pm}}$, $\dqh^{\pm}$ = $\dqh|_{\partial K^{\pm}}$.  Define the average $\{\cdot\}$ and the jump $[\cdot ]$ on interior edges/faces $e\in \cEi$ as
follows: 
\begin{equation}\label{jumpdef}
\begin{array}{ll}
\{\dqh\}=\frac{1}{2}(\dqh^++\dqh^-),&[\dqh]=\dqh^+\cdot \bn^++\dqh^-\cdot \bn^-,\\
\{\dv\}=\frac{1}{2}(\dv^++\dv^-),&[\dv] =\dv^+\bn^+  + \dv^-\bn^-.
\end{array}
\end{equation} 
For any boundary edge/face $e\subset \partial \Omega$, define
\begin{equation}\label{bddef}
\begin{array}{lllll}
\{\dqh\}=\dqh, &  [\dqh]=0, &\{\dv\}=\dv,&[\dv]=\dv\bn,& \text{on }\Gamma_D,\\
\{\dqh\}=\dqh,&  [\dqh]=\dqh\cdot\bn, &\{\dv\}=\dv, & [\dv]=0,& \text{on }\Gamma_N.
\end{array}
\end{equation}   
For any scalar-valued function $\dv$ and vector-valued function $\dqh$,  define the piecewise gradient $\nabla_h$ and piecewise divergence $\divh$ by
$$
\nabla_h \dv\big |_K=\nabla (\dv|_K), \quad
\divh \dqh\big |_K={\rm div} (\dqh |_K) \quad \forall K \in \cTh.
$$
Define some inner products as follows:
\begin{equation} \label{equ:inner-product}
(\cdot,\cdot)_\cTh=\sum_{K\in \cTh }(\cdot,\cdot)_{K},
\quad \langle\cdot,\cdot\rangle =\sum_{e\in \cE}\langle\cdot,\cdot\rangle_{e},
\quad \langle\cdot,\cdot\rangle_{\partial\cTh }=\sum_{K\in\cTh}\langle\cdot,\cdot\rangle_{\partial K}.
\end{equation} 
Whenever there is no ambiguity, we simplify $(\cdot, \cdot)_\cTh$ as $(\cdot,\cdot)$. 
With the aforementioned definitions, the following DG identity \cite{arnold2002unified} holds:
\begin{equation}\label{DGidentity}
(\dqh, \nabla_h \dv)
=-(\divh \dqh, \dv)
+ \langle [\dqh], \{\dv\}\rangle
+ \langle \{\dqh\}, [\dv]\rangle.
\end{equation}
 The four-field extended Galerkin formulation in \cite{XGHong} seeks  $(\dph, \tp, \du, \tu)\in \Q_h\times  \checkQ_h\times V_h\times \check V_{h}$ such that 
\begin{equation}\label{XGdiv}
\left\{
\begin{array}{rll}
(c\dph,\dqh) 
+(\du, \divh \dqh) 
-\langle \{u_{h}\} +\tu - \gamma [u_h], [\dqh] \rangle
&=0,&\ \forall \dqh \in \Q_h,\\
-(\divh \dph, \dv) 
-\langle \gamma[\dph]+\tp, [\dv]\rangle 
+\langle [ \dph], \{\dv\}\rangle
&=-(f,\dv) &\ \forall \dv\in V_h,\\
-\langle \tau^{-1}   
\tp+ [\du], \tq \rangle_{e }
&=0,&\forall  \tq \in \checkQ_{h},
\\
\langle \eta^{-1} \tu + [\dph], \tv \rangle_e
&=0,&\forall  \tv \in \check{V}_{h},
\end{array}
\right.
\end{equation}  
where  
\begin{align*}
\Q_h:&=\{\dqh\in L^2(\Omega, \R^n): \dqh|_K\in \Q(K), \ \forall K\in \cTh_h\},\\
\checkQ_h:&=\{\tq\in L^2(\cE, \R^n): \dqh|_K\in \checkQ(K), \ \forall K\in \cTh_h\},\\
V_h:&=\{v_h\in L^2(\Omega, \R): \dqh|_K\in V(K), \ \forall K\in \cTh_h\},\\
\check{V}_h:&=\{\tv\in L^2(\cE, \R): \dqh|_K\in \check{V}(K), \ \forall K\in \cTh_h\}.
\end{align*}
Here $\gamma$ is constant, $\tp$ and $\tu$ are the modifications to $\dph$ and $\du$ on elementary boundaries, respectively. 
Define the discontinuous spaces $\Q_h$, $\checkQ_h$, $V_h$ and $\check V_h$ with
$$
\Q(K)=P_k(K,\R^n),\ \checkQ(K)=P_k(K,\R^n),\ V(K)=P_k(K,\R), \ \check V(K)=P_k(K,\R)
$$
by $\Q_h^{k}$, $\checkQ_h^k$, $V_h^k$ and $\check V_h^{k}$, respectively.
Define
\begin{equation}\label{normdef}
\begin{array}{ll}
\|\dqh\|_{\rm div, h}^2 =(c\dqh,\dqh)
+ \|\divh  \dqh\|_0^2
+ \| \eta^{1/2}[\dqh ]\|_0^2,
&
\|\tq\|_{0, h}^2 =\|\tau^{-1/2}\tq\|_0^2,\\
\|\dv\|_{0, h}^2 =\|\dv\|_0^2+ \|\tau^{1/2}[\dv]\|_0^2+\|\eta^{-1/2}\{\dv\}\|_0^2,
&
\|\tv\|_{0, h}^2 =\|\eta^{-1/2}\tv\|_0^2.
\end{array}
\end{equation} 

For $H({\rm div})$-based formulations \eqref{XGdiv}, the well-posedness and the error estimate  is analyzed in \cite{XGHong} under a set of assumptions as presented below. The error  estimate of $\dph$ in $L^2$-norm is similar to the one for the stress tensor in \cite{hong2020extended}, thus the details of the proof is omitted here.

\begin{lemma}\label{lm:elliptic}
For $H({\rm div})$-based four-field formulation \eqref{XGdiv} with $\eta=\left(\rho h_{e}\right)^{-1}, \tau \cong \eta^{-1}=\rho h_{e}$, if the spaces $\Q_{h}, V_{h}, \check V_{h}$ satisfy the conditions
\begin{enumerate}
\item[(C1)]  Let $\boldsymbol{R}_{h}:=\Q_{h} \cap \Hs(\operatorname{div}, \Omega)$ and $\boldsymbol{R}_{h} \times V_{h}$ is a stable pair for mixed method;
\item[(C2)]  $\divh \Q_{h}=V_{h}$
\item[(C3)]  $\left\{V_{h}\right\} \subset \check{V}_{h}$
\end{enumerate}
Then,  the formulation \eqref{XGdiv} is uniformly well-posed with respect to the norms \eqref{normdef} when $\rho\in (0, \rho_0]$. Namely, if $(\dph, \tp, \du, \tu)\in \Q_h\times  \boldsymbol{\check{Q}}_h\times V_h\times \check V_{h}$ is the solution of \eqref{XGdiv}, it holds that
$$
\| \dph\|_{\rm div, h} + \|\tp\|_{0, h} + \|\du\|_{0, h} + \|\tu\|_{0, h}\lesssim \|f\|_{0, \Omega}.
$$
If $\cp\in \Hs^{k+2}(\Omega, \R^n)$, $u\in H^{k+1}(\Omega, \R) (k\ge 0)$, and $\Q\times \checkQ_h\times V_h\times \check V_h=\Q_h^{k+1} \times \checkQ_h^k\times V_h^k\times \check V_h^{k+1}$,%
\begin{equation}\label{XGerr}
\| \cp - \dph\|_{\rm div, h} + \|\tp\|_{0, h} + \|u-\du\|_{0, h} + \|\tu\|_{0, h}\lesssim h^{k+1}(|\cp|_{k+2} + |u|_{k+1}).
\end{equation}
Furthermore, if $\cp\in \Hs^{k+2}(\Omega, \R^n)$,
\begin{equation}\label{err:L2}
\| \cp - \dph\|_0\lesssim h^{k+2}(|\cp|_{k+2} + |u|_{k+1}).
\end{equation}
\end{lemma}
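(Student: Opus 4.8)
The plan is to treat the three assertions—uniform well-posedness, the energy-norm error bound \eqref{XGerr}, and the improved flux bound \eqref{err:L2}—in turn, since the first feeds the second and the second feeds the third. Throughout I write $\boldsymbol{x}_h=(\dph,\tp,\du,\tu)$ and test against $\boldsymbol{y}_h=(\dqh,\tq,\dv,\tv)$, and I denote by $B(\boldsymbol{x}_h,\boldsymbol{y}_h)$ the bilinear form assembled from the four equations of \eqref{XGdiv}, so that the combined norm is $\normmm{\boldsymbol{x}_h}:=\|\dph\|_{\rm div,h}+\|\tp\|_{0,h}+\|\du\|_{0,h}+\|\tu\|_{0,h}$.

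For the well-posedness I would establish continuity and an inf--sup condition for $B$ in $\normmm{\cdot}$. Continuity is routine: each term is bounded by Cauchy--Schwarz together with the trace and inverse inequalities needed to absorb the edge contributions into the $\eta$- and $\tau$-weighted pieces of $\normmm{\cdot}$, using precisely the scaling $\eta=(\rho h_e)^{-1}$, $\tau\cong\rho h_e$. The inf--sup is the substantive part. Given $\boldsymbol{x}_h$, I would build the test tuple in two stages. First, testing against the sign-adjusted copy $(\dph,-\tp,\du,\tu)$ collapses the cross terms and leaves exactly $(c\dph,\dph)+\|\tau^{-1/2}\tp\|_0^2+\|\eta^{-1/2}\tu\|_0^2$, while the skeleton equations (the third and fourth lines of \eqref{XGdiv}) identify $[\du]$ with $\tau^{-1}\tp$ and $[\dph]$ with $\eta^{-1}\tu$, thereby recovering the jump stabilizations. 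To recover the still-missing $\|\divh\dph\|_0$, $\|\du\|_0$ and the average $\{\du\}$, I would use (C2) to solve $\divh\dqh=\du$ with a bounded right inverse and invoke the stable mixed pair of (C1) (a bounded Fortin projection onto $\boldsymbol{R}_h\times V_h$) to supply the Babu\v{s}ka--Brezzi control of the divergence and the scalar unknown; (C3) guarantees $\{V_h\}\subset\check V_h$, so the coupling between bulk and skeleton variables closes consistently. A suitable linear combination of the two test tuples, with $\rho\in(0,\rho_0]$ small enough, then bounds $\normmm{\boldsymbol{x}_h}^2$ from below while the test tuple stays comparable to $\normmm{\boldsymbol{x}_h}$.

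For the energy error \eqref{XGerr} I would exploit consistency. Since \eqref{XGdiv} is a consistent discretization, inserting the exact solution—whose true flux and trace are single-valued, so its skeleton modifications vanish—shows that the error tuple satisfies the same equations with a right-hand side generated only by approximation residuals. I would then take the conforming projection $\Pipe$ with its commuting diagram $\divh\Pipe\cp=\Piu\,\divh\cp$ together with the $L^2$ projection $\Piu$ of $u$, split each error into an approximation part and a discrete part, and apply the inf--sup from the first step to the discrete part. Standard polynomial approximation in $\Q_h^{k+1}\times V_h^k$ and trace estimates on the skeleton terms then yield the rate $h^{k+1}(|\cp|_{k+2}+|u|_{k+1})$; the commuting diagram is what prevents the divergence term from degrading this rate.

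The improved bound \eqref{err:L2} is where the real work lies, and I would obtain it by an Aubin--Nitsche duality argument. After $\|\cp-\dph\|_0\le\|\cp-\Pipe\cp\|_0+\|\Pipe\cp-\dph\|_0$, the first term is $O(h^{k+2})$ by approximation in the degree-$(k+1)$ flux space, so the task is to show the discrete quantity $\boldsymbol{\theta}:=\Pipe\cp-\dph$ is $O(h^{k+2})$ rather than merely $O(h^{k+1})$. Representing $\|\boldsymbol{\theta}\|_0$ by duality, I would introduce the adjoint problem $c\boldsymbol{\phi}-\nabla\psi=\boldsymbol{\theta}$, $\divh\boldsymbol{\phi}=0$, with elliptic regularity $\|\boldsymbol{\phi}\|_1+\|\psi\|_2\lesssim\|\boldsymbol{\theta}\|_0$, feed $(\boldsymbol{\phi},\psi)$ through the error equations, and use the orthogonality of $\Pipe$ and $\Piu$ (again the commuting diagram) to convert the leading contributions into products of two approximation errors, each carrying a power of $h$. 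I expect the main obstacle to be the bookkeeping of the skeleton terms—the modifications $\tp,\tu$, the jumps $[\dph]$, and the averages $\{\du\}$—inside the duality pairing: one must verify, through the weights $\eta,\tau$ and the trace and inverse estimates, that these genuinely contribute a full extra power of $h$ and not just a marginal gain, and that the assumed $H^2$-type regularity of the mixed adjoint is legitimately available. Granting this, the two approximation factors combine to $h^{k+2}$, completing the estimate.
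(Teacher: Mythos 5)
The paper never actually proves this lemma: it is imported from \cite{XGHong}, and for the flux estimate \eqref{err:L2} the text explicitly defers to the analogous stress estimate in \cite{hong2020extended}, so the only comparison available is against the route those references (and the elasticity analogue in Section \ref{sec:elasticity}) indicate. Measured against that route, your first two steps are sound in outline: the sign-adjusted test tuple does collapse the cross terms to $(c\dph,\dph)+\|\tau^{-1/2}\tp\|_0^2+\|\eta^{-1/2}\tu\|_0^2$, conditions (C1)--(C3) are exactly what is needed to recover $\|\divh\dph\|_0$, $\|\du\|_0$ and the skeleton averages through a Fortin/right-inverse argument, and the energy bound \eqref{XGerr} by consistency plus a projection splitting and the discrete inf--sup is the standard argument.

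The genuine gap is your duality argument for \eqref{err:L2}. You posit the adjoint problem $c\boldsymbol{\phi}-\nabla\psi=\boldsymbol{\theta}$, $\operatorname{div}\boldsymbol{\phi}=0$, together with the regularity $\|\boldsymbol{\phi}\|_1+\|\psi\|_2\lesssim\|\boldsymbol{\theta}\|_0$. That regularity is false, and not merely because $\Omega$ is a polygon with mixed boundary conditions: the datum $\boldsymbol{\theta}$ is only in $L^2$ and enters the constitutive (first-order) equation rather than the source slot, so the dual solution inherits no smoothing at all. Concretely, take $\boldsymbol{\theta}=c\boldsymbol{w}$ with $\boldsymbol{w}\in L^2(\Omega,\R^n)$ divergence free and $\boldsymbol{w}\cdot\bn=0$ on $\Gamma_N$ (e.g.\ a rotated gradient of a function in $H^1\setminus H^2$); then $(\boldsymbol{\phi},\psi)=(\boldsymbol{w},0)$ is the exact dual solution and can be arbitrarily rough, so $\|\boldsymbol{\phi}\|_1\lesssim\|\boldsymbol{\theta}\|_0$ cannot hold. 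Aubin--Nitsche duality is the right tool for the \emph{scalar} variable --- that is precisely what the paper does later in Theorems \ref{th:super1} and \ref{th:super2}, where the dual datum $\psie$ enters through the divergence equation and $|\phie|_2\lesssim\|\psie\|_0$ is a legitimate elliptic-regularity hypothesis --- but it cannot improve the flux. The estimate \eqref{err:L2} is instead obtained directly from a commuting-diagram projection $\Pipe$ onto $\Q_h\cap\Hs(\operatorname{div},\Omega)$ with $\operatorname{div}\Pipe=\Piu\operatorname{div}$: one tests the first error equation with $\Pipe\cp-\dph$, uses the commuting property together with the second, third and fourth equations of \eqref{XGdiv} to reduce the divergence and jump contributions to weighted skeleton quantities already controlled by \eqref{XGerr}, and uses $\|\cp-\Pipe\cp\|_0\lesssim h^{k+2}|\cp|_{k+2}$ for the leading term; no dual regularity enters anywhere. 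The fingerprint of this mechanism is visible in the paper itself: the elasticity analogue \eqref{err:L22} is asserted only for $k\ge n$, which is exactly the threshold at which a symmetric $H(\operatorname{div})$-conforming commuting projection exists (the reference cited just before \eqref{picest}), a restriction that a duality proof would never produce.
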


We can establish the following superclose property for the extended Galerkin formulation  \eqref{XGdiv}. 
\begin{theorem}\label{th:super1}
Suppose $\cp\in H^{k+2}(\Omega, \R^n)$, $u\in H^{k+1}(\Omega, \R) (k\ge 0)$, and  
$(\dph, \tp, \du, \tu)\in \Q_h^{k+1}\times  \checkQ_h^k\times V_h^k\times \check V_h^{k+1}$ is the solution of the four-field formulation \eqref{XGdiv} with $\eta=\left(\rho h_{e}\right)^{-1}, \tau \cong \eta^{-1}=\rho h_{e}$. It holds that
\begin{equation*}
\|\Piu^k u - \du \|_{0, \Omega}\lesssim h^{\min(2k+2, k+3)}(|\cp|_{k+2} + |u|_{k+1}),
\end{equation*}
where $\Piu^k$ is the $L^2$-projection onto $V_h^{k}$.
\end{theorem}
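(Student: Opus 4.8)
The plan is to run a duality (Aubin--Nitsche) argument that exploits the commuting diagram supplied by conditions (C1)--(C2). Set $\xi_u := \Piu^k u - \du \in V_h^k$ and $\boldsymbol{e}_p := \cp - \dph$. Since the exact solution has continuous traces ($[\cp]=0$, $[u]=0$), the quadruple $(\cp,0,u,0)$ satisfies \eqref{XGdiv} exactly; subtracting the discrete equations gives the homogeneous error equations, in particular
\[
(c\boldsymbol{e}_p,\dqh) + (u-\du,\divh\dqh) - \langle\{u-\du\}-\tu-\gamma[u-\du],[\dqh]\rangle = 0
\]
and
\[
-(\divh\boldsymbol{e}_p,\dv) - \langle\gamma[\boldsymbol{e}_p]-\tp,[\dv]\rangle + \langle[\boldsymbol{e}_p],\{\dv\}\rangle = 0 .
\]
I then introduce the dual problem $c\phie = \nabla\psi$, $\operatorname{div}\phie = \xi_u$ in $\Omega$, with $\psi=0$ on $\Gamma_D$ and $\phie\cdot\bn=0$ on $\Gamma_N$, and invoke $H^2$ elliptic regularity $\|\psi\|_2 + \|\phie\|_1\lesssim\|\xi_u\|_0$.

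The first key step rewrites $\|\xi_u\|_0^2$. Let $\Pipe$ be the conforming (Raviart--Thomas/BDM-type) projection into $\boldsymbol{R}_h=\Q_h\cap\Hs(\operatorname{div},\Omega)$ guaranteed by (C1)--(C2); it satisfies the commuting relation $\divh\Pipe\phie = \Piu^k\operatorname{div}\phie$, preserves the Neumann trace so that $[\Pipe\phie]=0$ on every edge, and obeys $\|\phie-\Pipe\phie\|_0\lesssim h\|\phie\|_1$. Using $\xi_u\in V_h^k$ and $u-\Piu^k u\perp V_h^k$,
\[
\|\xi_u\|_0^2 = (\xi_u,\operatorname{div}\phie) = (\xi_u,\divh\Pipe\phie) = (u-\du,\divh\Pipe\phie).
\]
Because the interface terms in the first error equation vanish for $\dqh=\Pipe\phie$, that equation collapses to $(u-\du,\divh\Pipe\phie) = -(c\boldsymbol{e}_p,\Pipe\phie)$. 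Writing $\Pipe\phie = \phie + (\Pipe\phie-\phie)$, the increment is controlled by \eqref{err:L2} and $\|\phie-\Pipe\phie\|_0\lesssim h\|\phie\|_1$ to give a term of order $h^{k+3}\|\xi_u\|_0$, while symmetry of $c$ turns the remaining part into $-(\boldsymbol{e}_p,\nabla\psi)$.

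The second key step expands $(\boldsymbol{e}_p,\nabla\psi)$ through the DG identity \eqref{DGidentity}. Since $\psi$ is continuous and $\psi=0$ on $\Gamma_D$, the $[\psi]$-term drops and $(\boldsymbol{e}_p,\nabla\psi) = -(\divh\boldsymbol{e}_p,\psi) + \langle[\boldsymbol{e}_p],\{\psi\}\rangle$. Testing the second error equation with $\dv=\Piu^k\psi$ and substituting yields
\[
(\boldsymbol{e}_p,\nabla\psi) = \langle\gamma[\boldsymbol{e}_p]-\tp,[\Piu^k\psi]\rangle + \langle[\boldsymbol{e}_p],\{\psi-\Piu^k\psi\}\rangle - (\divh\boldsymbol{e}_p,\psi-\Piu^k\psi).
\]
The two interface terms are estimated by an $\eta$-weighted Cauchy--Schwarz inequality: the error factors $\|\eta^{1/2}(\gamma[\boldsymbol{e}_p]-\tp)\|_0$ and $\|\eta^{1/2}[\boldsymbol{e}_p]\|_0$ are $O(h^{k+1})$ by \eqref{XGerr}, while $\|\eta^{-1/2}[\Piu^k\psi]\|_0$ and $\|\eta^{-1/2}\{\psi-\Piu^k\psi\}\|_0$ are $O(h^{\min(k+1,2)}\|\psi\|_{\min(k+1,2)})$ by trace/approximation estimates, so both terms are $O(h^{\min(2k+2,k+3)}\|\xi_u\|_0)$. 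For the last term, $\divh\dph\in V_h^k\perp(\psi-\Piu^k\psi)$ reduces it to $(f-\Piu^k f,\psi-\Piu^k\psi)$, bounded by $\|f-\Piu^k f\|_0\,\|\psi-\Piu^k\psi\|_0\lesssim h^{k+1}|\cp|_{k+2}\cdot h^{\min(k+1,2)}\|\xi_u\|_0$, using $f=\operatorname{div}\cp$ with $\cp\in H^{k+2}$. Collecting all contributions, dividing by $\|\xi_u\|_0$, and recalling that the jump estimates carry the full seminorm $(|\cp|_{k+2}+|u|_{k+1})$ gives the claim.

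The crux, and the step I expect to be the main obstacle, is exactly this last reduction: the exponent $\min(2k+2,k+3)$ is forced by the term $(f-\Piu^k f,\psi-\Piu^k\psi)$, where $h^{k+1}$ comes from the data and $h^{\min(k+1,2)}$ is capped at $h^2$ by the limited $H^2$-regularity of the dual scalar $\psi$ --- this is precisely the source of the minimum. Two technical points require care: first, the existence of the conforming commuting projection $\Pipe$ with preserved Neumann trace, which is what (C1)--(C2) provide and what makes every interface contribution of $\Pipe\phie$ vanish; and second, the bookkeeping of the boundary edges on $\Gamma_D$ and $\Gamma_N$ in the DG identity, where the dual conditions $\psi|_{\Gamma_D}=0$ and $\phie\cdot\bn|_{\Gamma_N}=0$ ensure the spurious boundary contributions cancel. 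The argument also presupposes full $H^2$ regularity of the dual problem, which I would record as a standing assumption (e.g.\ $\Omega$ convex).
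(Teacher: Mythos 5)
Your proposal is correct and takes essentially the same approach as the paper: the paper omits this proof and points to the elasticity analysis (the error-representation lemma and Theorem \ref{th:super2}), and your argument is precisely its scalar specialization --- duality, the conforming commuting projection supplied by (C1)--(C2), the error equations tested with $\Pipe\phie$ and $\Piu^k\psi$, and the same term-by-term estimates yielding $h^{\min(2k+2,k+3)}$. The only cosmetic deviation is that you exploit the orthogonality $\divh\dph\perp(\psi-\Piu^k\psi)$ to reduce the divergence term to $(f-\Piu^k f,\psi-\Piu^k\psi)$, whereas the paper simply pairs $\|\divh(\cp-\dph)\|_0\lesssim h^{k+1}$ with the projection error of the dual variable; both give the same exponent, and both rely on the same implicit full $H^2$ regularity of the dual problem that you correctly flag as a standing assumption.
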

We omit the proof here since it is similar to the analysis for linear elasticity problems \eqref{model:elasticity} in the next section. 

\subsection{Postprocess techniques for scalar elliptic problems}
Consider the $H({\rm div})$-based four-field formulation \eqref{XGdiv} with $\Q_h=\Q_h^{k+1}$, $\checkQ_h=\checkQ_h^k$, $V_h= V_h^k$ and $\check V_h= \check V_h^{k+1}$.
Define 
\begin{equation}\label{hatdef}
\hp =  \{\dph\} + \gamma[\dph]+\tp.
\end{equation}
Note that $\hp$ is an approximation to $\cp$ on elementary boundaries. 

We list three postprocessing techniques \cite{arnold1985mixed,gastaldi1989sharp,stenberg1988family,stenberg1991postprocessing,cockburn2012conditions,cockburn2009superconvergent} for the elliptic problem \eqref{model:elliptic}. Here there are two choices of the projection operator $\Piu$, one is the $L^2$  projection to piecewise constant space, namely
$$
\Piu u= \Piu^0u,
$$
where $\Piu^k$ is the $L^2$  projection to $V_h^k$,
and the other one is the $L^2$  projection to the discrete space $V_h$, namely
$$
\int_\Om \Piu u \dv \dx = \int_\Om u\dv\dx,\quad \forall \dv\in V_h.
$$
For either choice of $\Piu$, consider the following three postprocessing schemes:
\begin{enumerate}
\item Let $u_{1, h}^*\in V_h^{k+2}$ be the solution of  
\begin{equation}\label{postdef11}
\left\{
\begin{aligned}
\int_K  \alpha\nabla u_{1, h}^*\cdot \nabla \dv dx&= - \int_K f\dv dx + \int_{\partial K}\dph\cdot \bn  \dv ds,&\forall \dv\in (I - \Piu)V_h^{k+2}\big |_K,
\\
\Piu (u_{1, h}^* - \du)&=0.
\end{aligned}
\right.
\end{equation}
with $\alpha = c^{-1}$.
\item Let $u_{2, h}^*\in V_h^{k+2}$ be the solution of 
\begin{equation}\label{postdef12}
\left\{
\begin{aligned}
\int_K  \alpha\nabla u_{2, h}^*\cdot \nabla \dv dx&= - \int_K f\dv dx + \int_{\partial K}\hp\cdot \bn  \dv ds,&\forall \dv\in (I - \Piu)V_h^{k+2}\big |_K,
\\
\Piu (u_{2, h}^* - \du)&=0.
\end{aligned}
\right.
\end{equation}
with $\alpha = c^{-1}$ and  $\hp$ defined in \eqref{hatdef}.
\item Let $u_{3, h}^*\in V_h^{k+2}$ be the solution of 
\begin{equation}\label{postdefe2}
\left\{
\begin{aligned}
\int_K  \nabla u_{3, h}^*\cdot \nabla \dv dx&= \int_K c\dph\cdot \nabla \dv dx,&\forall \dv\in (I - \Piu^0)V_h^{k+2}\big |_K,
\\
\Piu^0 (u_{3, h}^* - \du)&=0.
\end{aligned}
\right.
\end{equation}  
\end{enumerate}
Note that  the schemes  \eqref{postdef12} and \eqref{postdefe2} are identical in some special cases.
If $\alpha$ is a constant matrix, the first equation in  \eqref{postdefe2} is equivalent to 
$$
\int_K  \alpha\nabla u_{3, h}^*\cdot \nabla \dv dx= \int_K \dph \cdot \nabla \dv dx
= -\int_K \divh \dph \dv dx + \int_{\partial K}\dph \bn \dv ds
$$
for any $\dv\in (I - \Piu^0)V_h^{k+2}\big |_K$.
By \eqref{XGdiv}, the above equation reads
$$
\int_K  \alpha\nabla u_{3, h}^*\cdot \nabla \dv dx= -\int_K f\dv dx + \int_{\partial K} \hp\bn \dv ds.
$$
It implies that for this particular $\alpha$, the postprocess algorithms \eqref{postdef12} with 
$\Piu=\Piu^0$
 and \eqref{postdefe2} are the same.

We analyze in the following theorem that the above postprocessing techniques can improve the accuracy for the mixed discontinuous Galerkin formulation \eqref{XGdiv}.
\begin{theorem}\label{th:post1}
Suppose $\cp\in \Hs^{k+2}(\Omega)$, $u\in H^{k+3}(\Omega) (k\ge 0)$, and  
$(\dph, \tp, \du, \tu)\in \Q_h^{k+1}\times  \checkQ_h^k\times V_h^k\times \check V_h^{k+1}$ is the solution of the four-field formulation \eqref{XGdiv} with $\eta=\left(\rho h_{e}\right)^{-1}, \tau \cong \eta^{-1}=\rho h_{e}$. It holds that
\begin{equation*} 
\|u-\du^*\|_0 \lesssim h^{\min(2k+2, k+3)}|u|_{k+3},
\end{equation*} 
where $\du^*=u_{1, h}^*$ in \eqref{postdef11}, $u_{2, h}^*$ in \eqref{postdef12} or $u_{3, h}^*$ in \eqref{postdefe2}.
\end{theorem}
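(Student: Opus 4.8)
The plan is to compare each postprocessed discrete solution $\du^*$ with a \emph{continuous} postprocessing $u^*\in V_h^{k+2}$ of the exact solution, defined element by element by the very same local problem but with the exact data $(\cp,f)$ in place of the discrete data. For scheme \eqref{postdef11} I would let $u^*$ solve $\int_K\alpha\nabla u^*\cdot\nabla\dv\,dx=-\int_K f\dv\,dx+\int_{\partial K}\cp\cdot\bn\,\dv\,ds$ for all $\dv\in(I-\Piu)V_h^{k+2}|_K$ together with $\Piu u^*=\Piu u$, and analogously for \eqref{postdef12} and \eqref{postdefe2}. Since $c\cp=\nabla u$ and $\divh\cp=f$, each right-hand side equals $\int_K\alpha\nabla u\cdot\nabla\dv\,dx$, so the local problem reproduces every polynomial of degree $k+2$; combined with unisolvence of the local problem (for either choice of $\Piu$, by finite-dimensional norm equivalence after scaling) and a Bramble--Hilbert argument this yields $\|u-u^*\|_0\lesssim h^{k+3}|u|_{k+3}$. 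By the triangle inequality it then remains to estimate $\|u^*-\du^*\|_0$.

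Set $\delta=u^*-\du^*\in V_h^{k+2}$ and split $\delta=\Piu\delta+(I-\Piu)\delta$. The two matching conditions give $\Piu\delta=\Piu(u-\du)$; for $\Piu=\Piu^k$ this equals $\Piu^k u-\du$ directly, while for $\Piu=\Piu^0$ I would use $\Piu^0\Piu^k=\Piu^0$ to cancel the low-order projection error and reduce it to $\Piu^0(\Piu^k u-\du)$. In both cases the superclose estimate of Theorem \ref{th:super1} gives $\|\Piu\delta\|_0\lesssim h^{\min(2k+2,k+3)}(|\cp|_{k+2}+|u|_{k+1})$. For the complementary part, subtracting the two local problems shows that $\int_K\alpha\nabla\delta\cdot\nabla\dv\,dx$ equals a residual: the boundary residual $\int_{\partial K}(\cp-\dph)\cdot\bn\,\dv\,ds$ for \eqref{postdef11}, $\int_{\partial K}(\cp-\hp)\cdot\bn\,\dv\,ds$ for \eqref{postdef12}, and the volume residual $\int_K c(\cp-\dph)\cdot\nabla\dv\,dx$ for \eqref{postdefe2}. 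On $(I-\Piu)V_h^{k+2}|_K$ the form is coercive with constant $\sim h_K^{-2}$ (Poincaré, since these functions have zero mean), so testing with $(I-\Piu)\delta$ and invoking a scaled trace inequality bounds $\|(I-\Piu)\delta\|_{0,K}$ by $h_K^{3/2}$ times the boundary-flux error (schemes 1,2) or by $h_K\|\cp-\dph\|_{0,K}$ (scheme 3).

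Scheme \eqref{postdefe2} is then immediate, since summation gives $\|(I-\Piu^0)\delta\|_0\lesssim h\|\cp-\dph\|_0\lesssim h^{k+3}$ by the $L^2$ superconvergence \eqref{err:L2}. The two boundary-flux schemes require the additional step of bounding $\|\cp-\dph\|_{0,\partial K}$ and $\|\cp-\hp\|_{0,e}$ at a superconvergent rate. For $\|\cp-\dph\|_{0,\partial K}$ I would split $\cp-\dph=(\cp-\Pipe\cp)+(\Pipe\cp-\dph)$, applying approximation plus a continuous trace inequality to the first term and a discrete inverse trace inequality to the second, so that $\sum_K h_K^3\|\cp-\dph\|_{0,\partial K}^2\lesssim h^{2k+6}$. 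For $\hp$ I would use $\{\cp\}=\cp$ and $[\cp]=0$ from \eqref{hatdef} to write $\cp-\hp=-\{\cp-\dph\}-\gamma[\cp-\dph]-\tp$, controlling the jump and the $\tp$ contributions by $\|\eta^{1/2}[\cp-\dph]\|_0$ and $\|\tau^{-1/2}\tp\|_0$ from \eqref{XGerr} (each of size $h^{k+3/2}$ per edge after the weights $\eta,\tau\cong\rho h_e$), and the average by the same element-wise split as above. Collecting the pieces gives $\|(I-\Piu)\delta\|_0\lesssim h^{k+3}$ for all three schemes, and combining with $\|\Piu\delta\|_0$, $\|u-u^*\|_0$, and the bounds $|\cp|_{k+2}\lesssim|u|_{k+3}$, $|u|_{k+1}\lesssim|u|_{k+3}$ yields the claimed $h^{\min(2k+2,k+3)}$ rate.

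I expect the main obstacle to be the boundary-flux estimates for schemes \eqref{postdef11} and \eqref{postdef12}: the volume superconvergence \eqref{err:L2} does not transfer to element boundaries for free, and the delicate point is arranging the interpolation/discrete splitting so that the weight $h_K^{3/2}$ coming from coercivity and the trace inequalities exactly compensates the half-power losses. A secondary difficulty is the coercivity step in the case $\Piu=\Piu^k$, where $\nabla\Piu\delta\neq0$ produces a cross term $\int_K\alpha\nabla\Piu^k\delta\cdot\nabla(I-\Piu^k)\delta$; this must be absorbed by an inverse inequality together with the superclose bound on $\Piu^k u-\du$, after which it contributes only a term of the same order $h^{\min(2k+2,k+3)}$ and does not degrade the estimate.
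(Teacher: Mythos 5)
Your proposal is correct in substance and shares the paper's essential mechanism --- the superclose bound of Theorem \ref{th:super1} controls the $\Piu$-component of the error, a local coercivity/Poincar\'e argument gains one power of $h$ on the complementary component, and the boundary-flux mismatch must be shown to be of size $h^{k+3/2}$ per edge --- but it organizes the argument differently. The paper compares $\du^*$ with the $L^2$ projection $\Piu^{k+2}u$ and keeps the consistency term $(\alpha \nabla_h (\Piu^{k+2} - I)u, \nabla_h \dv)$ explicitly in the energy identity \eqref{eq:vH1}; the flux errors then appear as the global pairings $\langle [\ttp], \{\dv\}\rangle$ and $\langle \{\cp-\ttp\}, [\dv]\rangle$, bounded via the weighted jump norm $\|\eta^{1/2}[\dph]\|_\cE$ and $\|\tau^{-1/2}\tp\|_0$ from \eqref{XGerr}, the $L^2$ estimate \eqref{err:L2}, and the trace inequality \eqref{err:vedge} on $\dv$. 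You instead compare with the exact-data local postprocessing $u^*$, so the consistency term disappears into a Bramble--Hilbert estimate $\|u-u^*\|_0\lesssim h^{k+3}|u|_{k+3}$, and the residual becomes a purely elementwise boundary term that you control by splitting through a conforming $H(\operatorname{div})$ (BDM/RT-type) interpolant plus an inverse trace inequality. Each route buys something: yours is more modular and, at the average term, arguably more rigorous --- the paper's bound $\|\{\cp - \dph\}\|_\cE \lesssim h^{-1/2}\|\cp-\dph\|_0$ for the non-polynomial function $\cp-\dph$ tacitly requires exactly the interpolant split (or an extra derivative term) that you spell out --- while the paper's route avoids invoking any conforming flux interpolant inside this proof (it enters only implicitly through Theorem \ref{th:super1}). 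Both arguments treat the $\Piu = \Piu^k$ cross term identically (inverse inequality plus the superclose bound), and your direct treatment of $u_{3,h}^*$ via $\|(I-\Piu^0)\delta\|_0 \lesssim h \|\cp - \dph\|_0$ is precisely the argument the paper defers to Theorem \ref{th:post2}. Two cosmetic slips: the parameter scaling is $\eta \cong (\rho h_e)^{-1}$, not $\rho h_e$ (your edge computations remain correct), and $|u|_{k+1}\lesssim |u|_{k+3}$ is not literally a seminorm inequality, though the paper's final statement commits the same abuse by writing the bound with $|u|_{k+3}$ alone.
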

\begin{proof}
Let $\dv= (I - \Piu)(\Piu^{k+2} u - \du^*)$. Since 
$\Piu^0\dv=0$,
\begin{equation}\label{post1}
\begin{aligned}
\|\dv\|_{0, \Om}=\|\dv - \Piu^0\dv\|_{0, \Om}&\lesssim h|\dv|_{1, h}.
\end{aligned}
\end{equation}
It follows from the trace inequality that
\begin{equation}\label{err:vedge}
\begin{aligned}
\|[\dv]\|_\cE + \|\{\dv\}\|_\cE&\lesssim h^{-1/2}\|\dv\|_{0, \Omega}\lesssim h^{1/2}|\dv|_{1, h}.
\end{aligned}
\end{equation}
By \eqref{postdef11} and \eqref{postdef12},
\begin{equation*}
\begin{aligned}
|\dv|_{1, h}^2=&(\alpha \nabla_h (\Piu^{k+2} u - \du^*), \nabla_h \dv) 
- (\alpha \nabla_h \Piu(\Piu^{k+2} u - \du^*), \nabla_h \dv) 
\\
=&(\alpha \nabla_h \Piu^{k+2} u, \nabla_h \dv) + (f, \dv) - \langle [\ttp],  \{\dv\} \rangle - \langle \{\ttp\},  [\dv] \rangle
\\
&- (\alpha \nabla_h \Piu(\Piu^{k+2} u - \du^*), \nabla_h \dv),
\end{aligned}
\end{equation*}
where $\ttp=\dph$ if $\du^*=u_{1, h}^*$, and $\ttp=\hp$ if $\du^*=u_{2, h}^*$.
Since $f=\nabla\cdot (\alpha \nabla u)$ and $\cp = \alpha \nabla u$,
\begin{equation}\label{eq:vH1}
\begin{aligned}
|\dv|_{1, h}^2
=&(\alpha \nabla_h (\Piu^{k+2} - I)u, \nabla \dv) - \langle [\ttp],  \{\dv\} \rangle + \langle \{\cp - \ttp\},  [\dv] \rangle
\\
&- (\alpha \nabla_h \Piu(\Piu^{k+2} u - \du^*), \nabla_h \dv).
\end{aligned}
\end{equation}
If $\Piu=\Piu^0$, the last term on the right hand side of the above equation equals zero. If $\Piu$ is the $L^2$ projection to $V_h$, namely $\Piu=\Piu^k$, 
by the triangle inequality and the inverse estimate,
$$
|(\alpha \nabla_h \Piu(\Piu^{k+2} u - \du^*), \nabla_h \dv)|\lesssim h^{-1}\|\Piu(\Piu^{k+2} u - \du^*)\|_0|\dv|_{1,h}.
$$
Since $\Piu \du^*=\Piu\du$,
$$
|(\alpha \nabla_h \Piu(\Piu^{k+2} u - \du^*), \nabla_h \dv)|\lesssim h^{-1}\|\Piu(\Piu^{k+2} u - \du)\|_0|\dv|_{1,h}.
$$
If $\ttp=\dph$, by the error estimates in \eqref{XGerr} and \eqref{err:vedge},
\begin{equation*}
\begin{aligned}
\left | \langle [\ttp],  \{\dv\} \rangle \right|
\le \eta^{-1/2}\|\eta^{1/2}[\dph]\|_\cE \|\{\dv\}\|_\cE\le h^{k+2}|\dv|_{1, h}(|\cp|_{k+2} + |u|_{k+1}),
\\
\left | \langle \{\cp - \ttp\},  [\dv] \rangle \right|
\le h^{-1/2}\|\cp - \dph\|_0\|\{\dv\}\|_\cE\le h^{k+2}|\dv|_{1, h}(|\cp|_{k+2} + |u|_{k+1}).
\end{aligned}
\end{equation*}
If $\ttp=\hp$, $\ttp\cdot \bn$ is continuous on interior edges. Thus, 
$$
\langle [\ttp],  \{\dv\} \rangle=0.
$$
The error estimates in \eqref{XGerr} and \eqref{err:vedge} imply that
\begin{equation*}
\begin{aligned} 
\left | \langle \{\cp - \ttp\},  [\dv] \rangle \right|
\le \left | \langle \{\cp - \dph\},  [\dv] \rangle \right|
+ \left | \langle \gamma [\dph] + \tp,  [\dv] \rangle \right|
\lesssim  h^{k+2}|\dv|_{1, h}(|\cp|_{k+2} + |u|_{k+1}).
\end{aligned}
\end{equation*}
Substituting the above estimates, Theorem \ref{th:super1} and 
$$
|(\Piu^{k+2} - I)u|_{1,h}\lesssim h^{k+2}|u|_{k+3}
$$
into \eqref{eq:vH1},
\begin{equation}\label{post2}
\begin{aligned} 
|\dv|_{1,h}\lesssim \|\alpha\nabla_h (\Piu^{k+2} - I)u\|_{0, \Om} + h^{k+2}+h^{-1}\|\Piu(\Piu^{k+2} u - \du)\|_0\lesssim h^{k+2}|u|_{k+3}.
\end{aligned}
\end{equation}
By the definition of $\Piu^{k}$, $\du^*$ and the superconvergence result in Theorem \ref{th:super1},
$$
\|\Piu(\Piu^{k+2} u - \du^*)\|_{0}=\|\Piu(\Piu^k u - \du)\|_{0}\lesssim h^{\min(2k+2, k+3)}(|\cp|_{k+2} + |u|_{k+1}).
$$
It follows \eqref{post1}, \eqref{post2} and the above estimate that 
\begin{equation*}
\begin{aligned} 
\|u-\du^*\|_0 \le \|u - \Piu^{k+2}u\|_0 + \|\Piu(\Piu^{k+2} u - \du^*)\|_{0} + \|\dv\|_0\lesssim h^{\min(2k+2, k+3)}|u|_{k+3},
\end{aligned}
\end{equation*} 
which completes the proof for $\du^*=u_{1, h}^*$ and $u_{2, h}^*$. The proof for $\du^*=u_{3, h}^*$ is  similar to the analysis in Theorem \ref{th:post2} for linear elasticity problem, which is omitted here.
\end{proof}
\begin{remark}
Similar to the analysis in \cite{hong2020extended} which is also presented in Section \ref{sec:elaintro} for the linear elasticity problem,  the formulation \eqref{XGdiv} with 
\begin{equation}\label{hybridchoice}
\tau=\mathcal{O}(h),\ \eta=\tau^{-1}, \ \gamma=0
\end{equation} 
 is hybridizable,  and can be reduced to a formulation with $\hp$. 
 By solving this reduced formulation  with much less computational cost, we can construct an approximation $u_{2, h}^*$ to $u$ with accuracy $\mathcal{O}(h^{\min(2k+2, k+3)})$ if the solution $u$ is smooth enough.
\end{remark}

\begin{remark}
For the first two postprocessing procedures, we let $u_{1, h}^*$ and $u_{2, h}^*$ in the discrete space $V_h^{k+2}$ to guarantee the superconvergence in Theorem \ref{th:post1}.
For a general mixed discontinuous Galerkin formulation with the conditions (C1)-(C3), if  $\du$ superconverges to the projection of the exact displacement, namely
$$
\|\Piu u - \du \|_{0, \Omega}\lesssim h^r \inf_{\dqh\in \Q_h, \dv\in V_h} (\|\cp - \dqh\|_{\rm div, h} + \|u-\dv\|_{0, h}),
$$
and $\displaystyle \|\cp - \dph\|_0\lesssim h^{\min (1,r-1)}\inf_{\dqh\in \Q_h, \dv\in V_h} (\|\cp - \dqh\|_{\rm div, h} + \|u-\dv\|_{0, h})$. 
We can choose a similar postprocessing technique by replacing $V_h^{k+2}$ in \eqref{postdef11} and \eqref{postdef12} by a large enough discrete space $\tilde V_h$ with $V_h\subset \tilde V_h$ and 
$$
\inf_{\dv\in \tilde V_h} \|u-\dv\|_0 + h|u-\dv|_{1,h}\lesssim h^{\min(2, r)} \inf_{\dqh\in \Q_h, \dv\in V_h} (\|\cp - \dqh\|_{\rm div, h} + \|u-\dv\|_{0, h}).
$$ 
Then, a similar analysis proves the superconvergence result
$$
\|u-u_h^*\|_0\lesssim h^{\min(2, r)} \inf_{\dqh\in \Q_h, \dv\in V_h} (\|\cp - \dqh\|_{\rm div, h} + \|u-\dv\|_{0, h}).
$$
\end{remark}

Next we introduce a Taylor expansion type postprocessing scheme, which follows \cite{bramble1989local}.
Recall that $\Piu^k$ is the $L^2$-projection onto $V_h^k$. Define the operator $\Piut^{k+2}$ onto $V_h^{k+2}$ by
\begin{equation*}
\left\{
\begin{split}
\int_K \partial^\alpha (u - \Piut^{k+2} u) \dx&=0,\quad \forall\  k+1\le |\alpha|\le k+2,
\\
\Piu^k (u - \Piut^{k+2} u)&=0.
\end{split}
\right.
\end{equation*}
Define $\phi_\alpha$ by $\phi_\alpha|_K={1\over \alpha!}(x-M_K)^\alpha$,
where $M_K$ is the centroid of element $K$. There exists the Taylor expansion
\begin{align}\label{eq:taylor}
(\Piut^{k+2} - \Piu^k)u = (I-\Piu^k)\Piut^{k+2}u = \sum_{|\alpha|=k+1}^{k+2} c_\alpha (I - \Piu^k )\phi_\alpha
\end{align}
with constants $c_\alpha$ to be determined.
Since
$$
\Piu^0\partial^\beta \phi_\alpha =\delta_{\alpha\beta},\quad
\partial^\beta  \Piut^{k+2}u= \sum_{|\alpha|=k+1}^{k+2} c_\alpha \partial^\beta\phi_\alpha
$$
for any $k+1\le |\alpha|, |\beta|\le k+2$, it holds that
\begin{align}
c_\alpha = \Piu^0\partial^\alpha  \Piut^{k+2} u = \Piu^0\partial^\alpha u,
\end{align}
which can be written as a function of $\cp=c\nabla u$, namely, $c_\alpha=c_\alpha(\cp)$.
Define the Taylor expansion type postprocessing $u_{4, h}^*\in V_h^{k+2}$ in \cite{bramble1989local}  by
\begin{align}\label{taylordef}
u_{4, h}^*=u_h + \sum_{|\alpha|=k+1}^{k+2} c_\alpha(\dph) (I - \Piu^k )\phi_\alpha.
\end{align}

The proof for the following theorem indicates that the same superconvergence result can be obtained if $\dph$ in \eqref{taylordef} is replaced by any high accuracy approximation to $\cp$.

\begin{theorem}\label{th:post1taylor}
Suppose $\cp\in \Hs^{k+2}(\Omega)$, $u\in H^{k+3}(\Omega) (k\ge 0)$, and  
$(\dph, \tp, \du, \tu)\in \Q_h^{k+1}\times  \checkQ_h^k\times V_h^k\times \check V_h^{k+1}$ is the solution of the four-field formulation \eqref{XGdiv} with $\eta=\left(\rho h_{e}\right)^{-1}, \tau \cong \eta^{-1}=\rho h_{e}$. It holds that
\begin{equation*} 
\|u- u_{4, h}^*\|_0 \lesssim h^{\min(2k+2, k+3)}|u|_{k+3}.
\end{equation*}  
\end{theorem}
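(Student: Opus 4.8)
The plan is to compare $u_{4,h}^*$ with the auxiliary quantity $\Piut^{k+2}u$ and to exploit two ingredients already at hand: the superclose estimate of Theorem~\ref{th:super1} and the optimal $L^2$ superconvergence \eqref{err:L2} of $\dph$. First I would split
\[
u - u_{4,h}^* = (u - \Piut^{k+2}u) + (\Piut^{k+2}u - u_{4,h}^*).
\]
For the first summand I would observe that the defining conditions of $\Piut^{k+2}$ (matching the elementwise averages of all derivatives of order $k+1$ and $k+2$, together with the $L^2$-projection onto $V_h^k$) are exactly $\dim P_{k+2}$ in number and reproduce every polynomial of degree $\le k+2$; a scaling/Bramble--Hilbert argument on each $K$ then yields $\|u - \Piut^{k+2}u\|_0 \lesssim h^{k+3}|u|_{k+3}$, using $u\in H^{k+3}(\Om)$.

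For the second summand I would substitute the Taylor identity \eqref{eq:taylor}, namely $\Piut^{k+2}u - \Piu^k u = \sum_{|\alpha|=k+1}^{k+2} c_\alpha(\cp)(I-\Piu^k)\phi_\alpha$, and subtract the definition \eqref{taylordef}, obtaining
\[
\Piut^{k+2}u - u_{4,h}^* = (\Piu^k u - \du) + \sum_{|\alpha|=k+1}^{k+2}\bigl(c_\alpha(\cp) - c_\alpha(\dph)\bigr)(I-\Piu^k)\phi_\alpha.
\]
The first term is precisely the superclose quantity, bounded by Theorem~\ref{th:super1} by $h^{\min(2k+2,k+3)}(|\cp|_{k+2}+|u|_{k+1})$, so the whole problem reduces to estimating the coefficient differences.

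For those differences I would use that, since $\nabla u = c\cp$, writing $\alpha = \alpha'+e_i$ exhibits $c_\alpha(\cq) = \Piu^0\,\partial^{\alpha'}(c\cq)_i$ as a linear functional of $\cq$; hence $c_\alpha(\cp)-c_\alpha(\dph)$ is the elementwise average of $\partial^{\alpha'}(c(\cp-\dph))_i$, with $|\alpha'| = |\alpha|-1 \in \{k,k+1\}$. Bounding this average by Cauchy--Schwarz and using $\|(I-\Piu^k)\phi_\alpha\|_{0,K}\lesssim h_K^{|\alpha|+n/2}$, each product on $K$ is controlled by $h_K^{|\alpha'|+1}\|\partial^{\alpha'}(c(\cp-\dph))\|_{0,K}$. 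I would then split $\cp-\dph = (\cp - \Pipe\cp)+(\Pipe\cp - \dph)$, with $\Pipe$ the $L^2$-projection onto $\Q_h^{k+1}$: on the approximation part the projection estimate $\|\partial^{\alpha'}(\cp-\Pipe\cp)\|_{0,K}\lesssim h_K^{k+2-|\alpha'|}|\cp|_{k+2,K}$ gives a contribution $h_K^{k+3}|\cp|_{k+2,K}$; on the discrete part the inverse inequality $\|\partial^{\alpha'}(\Pipe\cp-\dph)\|_{0,K}\lesssim h_K^{-|\alpha'|}\|\Pipe\cp-\dph\|_{0,K}$ leaves $h_K\|\Pipe\cp-\dph\|_{0,K}$, and summation combined with \eqref{err:L2} yields $h\cdot h^{k+2}=h^{k+3}$. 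Collecting the three pieces and using $|\cp|_{k+2}\lesssim |u|_{k+3}$ gives the stated bound, the exponent $\min(2k+2,k+3)$ being forced by the superclose term (the remaining terms being $O(h^{k+3})$, which is dominated for $k=0$ and equal for $k\ge 1$).

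The main obstacle is exactly this last step: the coefficients $c_\alpha$ involve derivatives of $\cp-\dph$ of order up to $k+1$, so a naive bound loses $h^{-(k+1)}$ and destroys the rate. The resolution is that these lost powers are recovered precisely --- the weight $h_K^{|\alpha|+n/2}$ carried by $\phi_\alpha$ together with the one extra order in the \emph{optimal} $L^2$ estimate \eqref{err:L2} (i.e.\ $h^{k+2}$ rather than $h^{k+1}$) cancel the inverse-inequality factor, so the discrete part still delivers the full order $h^{k+3}$. A nonconstant $c$ is handled routinely by Leibniz' rule, and the remark preceding the theorem is reflected in the observation that only the $O(h^{k+2})$ accuracy of $\dph$, not its specific construction, enters the estimate.
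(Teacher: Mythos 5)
Your proposal is correct and follows essentially the same route as the paper: the same three-way decomposition through $\Piut^{k+2}u$ and the Taylor identity \eqref{eq:taylor}, with the superclose term handled by Theorem~\ref{th:super1}, the interpolation term by Bramble--Hilbert, and the coefficient differences $c_\alpha(\cp)-c_\alpha(\dph)$ paired against the weight $\|(I-\Piu^k)\phi_\alpha\|_{0,K}\lesssim h_K^{|\alpha|+n/2}$ and closed by the optimal $L^2$ estimate \eqref{err:L2}.

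The one place where you go beyond the paper is the estimate of the coefficient differences: the paper simply asserts $\|c_\alpha(\cp)-c_\alpha(\dph)\|_0\lesssim h^{-|\alpha|+1}\|\cp-\dph\|_0$ ``by the definition of $c_\alpha$,'' whereas you prove it by splitting $\cp-\dph$ into a projection error, controlled by the approximation property, and a discrete remainder, controlled by the inverse inequality. This extra step is not cosmetic: an inverse-type bound of the form $\|\Piu^0\partial^{\alpha'}w\|_{0,K}\lesssim h_K^{-|\alpha'|}\|w\|_{0,K}$ is false for general (non-polynomial) $w$ once $|\alpha'|\ge 2$, so the paper's one-line claim needs exactly the smooth-plus-discrete splitting you supply; your version also makes transparent where the extra $h^{k+3}|\cp|_{k+2}$ contribution enters and why the $h^{k+2}$ (rather than $h^{k+1}$) accuracy of $\dph$ is indispensable. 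In short, your argument is the paper's argument, carried out at the level of rigor the paper omits.
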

\begin{proof}
Note that 
\begin{align*}
u - u_{4, h}^*&= (\Piu^k u - u_h ) + (u -  \Piut^{k+2}u)  +  (\Piut^{k+2}u -  \Piu^k u - \sum_{|\alpha|=k+1}^{k+2}  c_\alpha(\dph) (I - \Piu^k )\phi_\alpha)
\\
&= (\Piu^k u - u_h ) + (u -  \Piut^{k+2}u)  +  \sum_{|\alpha|=k+1}^{k+2}  (c_\alpha(\cp)  - c_\alpha(\dph) ) (I - \Piu^k )\phi_\alpha.
\end{align*}
By the definition of $c_\alpha(\cdot)$,
\begin{align}
\|c_\alpha(\cp)  - c_\alpha(\dph)\|_0\lesssim h^{-|\alpha|+1}\|\cp - \dph\|_0.
\end{align}
It follows from the above equation and the fact $\|(I - \Piu^k )\phi_\alpha\|_0\lesssim h^{|\alpha|}$ that
\begin{align}
\|u - u_{4, h}^*\|_0\le &\|\Piu^k u - u_h\|_0 + \|u -  \Piu^{k+2}u\|_0 +  \sum_{|\alpha|=k+1}^{k+2}  \|c_\alpha(\cp)  - c_\alpha(\dph)\|_0\|(I - \Piu^k )\phi_\alpha \|_0
\\
\le&\|\Piu^k u - u_h\|_0 + h^{k+3}|u|_{k+3} + h\|\cp - \dph\|_0.
\end{align}
A substitution of \eqref{err:L2} and Theorem \ref{th:super1} into the above inequality leads to
\begin{align}
\|u - u_{4, h}^*\|_0\lesssim h^{\min(2k+2, k+3)}(|\cp|_{k+2} + |u|_{k+1}),
\end{align}
which completes the proof.
\end{proof}

\section{Linear elasticity problems}\label{sec:elasticity}
This section analyzes the superconvergence result for the linear elasticity problem \eqref{XGdiv2}. 

\subsection{Discontinuous Galerkin method for linear elasticity problems}\label{sec:elaintro}
Consider the linear elasticity problem  \eqref{model:elasticity}. 
Let
\begin{align*}
\Qe_h:&=\{\dqh\in L^2(\Omega, \S): \dqh|_K\in \Q(K), \ \forall K\in \cTh_h\},\\
\V_h:&=\{v_h\in L^2(\Omega, \R^n): \dqh|_K\in V(K), \ \forall K\in \cTh_h\},\\
\checkQe_h:&=\{\tq\in L^2(\cE, \S): \dqh|_K\in \checkQ(K), \ \forall K\in \cTh_h\},\\
\checkV_h:&=\{\tv\in L^2(\cE, \R^n): \dqh|_K\in \check{V}(K), \ \forall K\in \cTh_h\},
\end{align*}
where $\Qe_h, \checkQe_{h}, \V_h, \checkV_{h}$  are subspaces of $L^2(\Omega, \S)$, $L^2(\cE, \S)$, $L^2(\Omega, \R^n)$ and $L^2(\cE, \R^n)$, respectively.
For any vector-valued function $\dve\in\V_h$ and tensor-valued function $\dtau\in\Qe_h$, let $\dve^{\pm}$ = $\dve|_{\partial K^{\pm}}$, $\dtau^{\pm}$ = $\dtau|_{\partial K^{\pm}}$.  Define the average $\{\cdot\}$ and the jump $[\cdot ]$ on interior edges/faces $e\in \cEi$ as
follows:  
\begin{equation}\label{jumpdef2}
\begin{array}{ll}
\{\dtau\}=\frac{1}{2}(\dtau^++\dtau^-),&[\dtau]=\dtau^+\bn^++\dtau^-\bn^-,\\
\{\dve\}=\frac{1}{2}(\dve^++\dve^-),&[\dve] =\dve^+\odot \bn^++\dve^-\odot \bn^- - (\dve^+\cdot \bn^+ + \dve^-\cdot \bn^-)I,
\end{array}
\end{equation} 
where $\dve\odot \bn= \dve\bn^T+ \bn\dve^T$. 
For any boundary edge/face $e\subset \partial \Omega$, define
\begin{equation}\label{bddef2}
\begin{array}{lllll}
\{\dtau\}=\dtau, &  [\dtau]=0, &\{\dve\}=\dve,&[\dve]=\dve\odot \bn -(\dve\cdot \bn)I,& \text{on }\Gamma_D,\\
\{\dtau\}=\dtau,&  [\dtau]=\dtau\bn, &\{\dve\}=\dve, & [\dve]=0,& \text{on }\Gamma_N.
\end{array}
\end{equation}   
With the aforementioned definitions,  the following identities \cite{arnold2002unified} holds:
\begin{equation}\label{identities}
\langle \dtau\bn, \dve\rangle_\PcT = \langle \{\dtau\}, [\dve]\rangle + \langle [\dtau], \{\dve\}\rangle,\quad \forall \dtau\in \Qe_h,\ \dve\in \V_h.
\end{equation}
For any vector-valued function $\dve$ and tensor-valued function $\dtau$,  define the piecewise symmetric strain tensor $\epsilon_h$ and piecewise divergence $\divh$ by
$$
\epsilon_h (\dve)\big |_K=\epsilon (\dve|_K), \quad
\divh \dtau\big |_K=\mbox{div} (\dtau |_K), \quad \forall K \in \cTh_h.
$$ 
There exists a similar DG identity  to \eqref{DGidentity} as below
\begin{equation}\label{DGidentity2}
(\dtau, \epsilon_h(\dve))
=-(\divh \dtau, \dve)
+ \langle [\dtau], \{\dve\}\rangle
+ \langle \{\dtau\}\bn, [\dve]\bn\rangle,\quad
\mbox{if}\  \dtau\in \Qe_h.
\end{equation}

For the linear elasticity problem \eqref{model:elasticity}, consider the four-field extended Galerkin formulation in \cite{hong2020extended}, which seeks  $(\dsig, \hsig, \due, \hue)\in \Qe_h\times  \checkQe_h\times \V_h\times \checkV_h$ such that 
\begin{equation}\label{XGdiv2}
\left\{
\begin{array}{rll}
(A\dsig,\dtau) 
+(\due, \divh \dtau) 
-\langle \{\due\} + \hue - (\gamma\cdot \bn)[\due]\bn, [\dtau]\rangle
&=0,&\ \forall \dtau \in \Qe_h,
\\
(\divh \dsig, \dve) 
-\langle [\dsig], \{\dve\}\rangle 
+\langle \hsig +  [\dsig]\gamma^T, [\dve]\rangle
&=(f,\dve)  &\ \forall \dve\in \V_h,
\\
\langle \tau^{-1}   
\hsig+ [\due], \htau \rangle_{e }
&=0,&\forall  \htau \in \checkQe_{h},
\\
\langle \eta^{-1} \hue + [\dsig], \hve \rangle_e
&=0,&\forall  \hve \in \checkV_{h}.
\end{array}
\right.
\end{equation}  
For any $(\dtau, \htau, \dve, \hve)\in \Qe_h\times \checkQe_{h}\times \V_h\times \checkV_{h}$, define
\begin{align*}
\|\dtau\|_{\rm div, h}^2 &= (A\dtau, \dtau) + \|\divh \dtau\|_{0}^2 + \|\eta^{1/2}[\dtau]\|_\cE^2, & \|\htau\|_{0, h}^2&=\|\tau^{-1/2}\htau\|_\cE^2,
\\
\|\dve\|_{0, h}^2 &= \|\dve\|_{0}^2 + \|\tau^{1/2}[\dve]\|_\cE^2,& \|\hve\|_{0, h}^2&=\|\eta^{-1/2}\hve\|_\cE^2,
\end{align*}
and the $L^2$ norm of $\dtau$ by
$$
\|\dtau\|_A^2 = (A\dtau, \dtau),\quad \forall \dtau \in L^2(\Omega, \S).
$$

For $H({\rm div})$-based formulations \eqref{XGdiv2}, the well-posedness and the error estimate are analyzed in \cite{hong2020extended} under a set of assumptions as listed in the following lemma. 
\begin{lemma}\label{lm:elasticity}
The four-field formulation \eqref{XGdiv2} which satisfies the conditions
\begin{enumerate}
\item[(A1)] $\Qe_h=\Qe_{h}^{k+1}$, $\divh \Qe_h=\V_h\subset \V_h^k$, $k\ge 0$;
\item[(A2)] $\checkV_h^{k+1}\subset \checkV_h$;
\item[(A3)] $\tau=\rho_1 h_e $, $\eta=\rho_2^{-1} h_e^{-1}$ and there exist positive constants $C_1$, $C_2$,  $C_3$ and $C_4$ such that
$$
0\leq \rho_1\leq C_1,\quad C_2\leq \rho_2\leq C_3,\quad 0\leq \gamma\leq C_4,
$$  
\end{enumerate}
 is uniformly well-posed with respect to the norms when $\rho_1$ and $\rho_2$. Namely, if $\csig\in H^{k+2}(\Om, \S)$, $u\in H^{k+1}(\Om, \R^n) ( k\ge0 )$ and let $(\dsig, \hsig, \due, \hue)\in \Qe_h^{k+1}\times \checkQe_{h}^k\times \V_h^{k}\times \checkV_{h}^{k+1}$ be the solution of \eqref{XGdiv2}, then we have the following error estimate:
\begin{equation}\label{XGerr2}
\|\csig-\dsig\|_{\rm div, h}+\|\hsig\|_{0, h} + \|\cu-\due\|_{0, h} +\|\hue\|_{0, h}\lesssim h^{k+1}(|\csig|_{k+2} + |\cu|_{k+1}).
\end{equation} 
Furthermore, if $k\ge n$, it holds that
\begin{equation}\label{err:L22}
\| \csig - \dsig\|_A\lesssim h^{k+2}(|\csig|_{k+2} + |\cu|_{k+1}).
\end{equation}
\end{lemma}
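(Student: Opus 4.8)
The plan is to treat \eqref{XGdiv2} as a single variational problem on the product space $\Qe_h\times \checkQe_h\times \V_h\times \checkV_h$ and to prove the three assertions in sequence: uniform well-posedness, the energy rate \eqref{XGerr2}, and finally the improved bound \eqref{err:L22}. For the first I would sum the four equations into one bilinear form $\mathcal{B}$ acting on the composite variable $(\dsig,\hsig,\due,\hue)$ and the composite test function $(\dtau,\htau,\dve,\hve)$, equipped with the product of the norms $\|\cdot\|_{\rm div,h}$, $\|\cdot\|_{0,h}$, $\|\cdot\|_{0,h}$, $\|\cdot\|_{0,h}$. Boundedness of $\mathcal{B}$ is immediate from Cauchy--Schwarz once the scalings $\tau=\rho_1 h_e$ and $\eta=\rho_2^{-1}h_e^{-1}$ of (A3) are used to match the stabilization terms $\langle \tau^{-1}\hsig,\htau\rangle$, $\langle\eta^{-1}\hue,\hve\rangle$ and the jump couplings to the norms. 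The heart of the argument is the inf-sup (discrete LBB) condition: for a given $(\dsig,\hsig,\due,\hue)$ I would construct an explicit test tuple realizing the lower bound. The diagonal choice recovers the coercive contributions $(A\dsig,\dsig)$, $\|\eta^{1/2}[\dsig]\|_\cE^2$, $\|\tau^{-1/2}\hsig\|_\cE^2$ and $\|\eta^{-1/2}\hue\|_\cE^2$; the missing control of $\|\divh\dsig\|_0$ and $\|\due\|_0$ is supplied by the surjectivity $\divh\Qe_h=\V_h$ in (A1) together with the stability of the conforming symmetric pair $\boldsymbol{R}_h:=\Qe_h\cap \Hs(\mathrm{div},\Om)$ and $\V_h$, which furnishes a tensor with prescribed divergence of bounded $H(\mathrm{div})$-norm. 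Condition (A2), ensuring $\checkV_h^{k+1}\subset\checkV_h$, provides the edge richness needed so that the hybrid equations are compatible and the edge unknowns $\hsig$ and $\hue$ are controlled.

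Given uniform well-posedness, I would derive \eqref{XGerr2} from a quasi-optimality (C\'ea) estimate. The exact solution has $\csig\in\Hs(\mathrm{div})$ and continuous $\cu$, so all its jumps vanish; inserting $(\csig,0,\cu,0)$ into \eqref{XGdiv2} and using the DG identities \eqref{DGidentity2}--\eqref{identities} shows it solves the formulation exactly, hence Galerkin orthogonality holds. Combining orthogonality with the inf-sup bound reduces the error to the best approximation error in the product norm. I would then estimate this by the $L^2$-projection onto $\V_h$ for the displacement and a commuting $H(\mathrm{div})$-conforming interpolation for the stress, so that the divergence of the stress interpolation error is controlled through $\divh\Qe_h=\V_h$; with $\csig\in H^{k+2}$ against degree $k+1$ and $\cu\in H^{k+1}$ against degree $k$, standard interpolation and trace estimates give each term at order $h^{k+1}$, which is \eqref{XGerr2}.

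For the improved estimate \eqref{err:L22} I would run a duality (Aubin--Nitsche) argument on the $A$-weighted stress error $\epp=\csig-\dsig$. I would introduce the auxiliary elasticity problem with $\epp$ (through $A$) as data, whose solution $(\phie,\psie)$ enjoys the elliptic regularity bound $\|\phie\|_1+\|\psie\|_2\lesssim \|\epp\|_A$. Writing $\|\epp\|_A^2$ as the pairing of $\epp$ against the dual solution, integrating by parts via \eqref{DGidentity2}, and subtracting a discrete interpolant of $(\phie,\psie)$ by Galerkin orthogonality, I would be left with terms each carrying one extra power of $h$ beyond the energy error, yielding $\|\epp\|_A^2\lesssim h\,\|\epp\|_A\,h^{k+1}$ and hence the rate $h^{k+2}$. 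The extra power is produced by a superapproximation/commuting-diagram property of a conforming, strongly symmetric stress interpolation applied to $\phie$, and this is exactly where $k\ge n$ enters.

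The \emph{main obstacle} is precisely this last step: for strongly symmetric stresses, a conforming interpolation into $\Qe_h^{k+1}\cap\Hs(\mathrm{div},\Om,\S)$ that commutes with the divergence and reproduces the required order is available only when the polynomial degree is large enough, i.e. $k\ge n$, the same threshold at which stable symmetric mixed elements of Hu--Zhang type exist. Without it the duality argument cannot gain the extra factor of $h$, which is why \eqref{err:L22} is stated only for $k\ge n$. Verifying the mapping properties and the commuting diagram of this interpolation, rather than the surrounding functional-analytic machinery, is the technically delicate part of the proof.
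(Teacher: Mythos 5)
First, a point of comparison: the paper itself contains \emph{no proof} of Lemma \ref{lm:elasticity}. Both the well-posedness and the estimates \eqref{XGerr2}, \eqref{err:L22} are imported verbatim from \cite{hong2020extended} (``the well-posedness and the error estimate are analyzed in \cite{hong2020extended}''), so your proposal can only be measured against the standard argument for such four-field formulations, not against an internal proof.

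Measured that way, your sketch has one genuine gap, and it sits exactly at the point this lemma is engineered to avoid. For both the discrete inf-sup condition and the energy estimate \eqref{XGerr2} you invoke the stability of the conforming symmetric pair $\boldsymbol{R}_h:=\Qe_h\cap \Hs(\operatorname{div},\Omega)$ with $\V_h$, and a commuting $\Hs(\operatorname{div})$-conforming stress interpolation. For \emph{strongly symmetric} stresses neither tool is available below the Hu--Zhang threshold: a stable conforming pair and a commuting interpolation $\Pipe$ (Assumption \ref{ass:commuting}, estimate \eqref{picest}) are known to exist only when the stress degree $k+1$ is at least $n+1$, i.e. $k\ge n$ \cite{hu2015finite}. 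Since well-posedness and \eqref{XGerr2} are asserted for \emph{every} $k\ge 0$, your argument covers only $k\ge n$ and leaves $0\le k<n$ unproved; this is precisely why the elasticity conditions (A1)--(A3), unlike the scalar condition (C1), do not postulate a stable conforming pair. The repair is to work entirely element by element, which is what discontinuity buys: $\divh\colon P_{k+1}(K,\S)\rightarrow P_k(K,\R^n)$ is onto with a uniformly bounded right inverse (e.g. a diagonal tensor whose $i$-th diagonal entry is an antiderivative in $x_i$ of the $i$-th component of the datum), and together with the jump stabilization this yields the inf-sup condition for all $k\ge 0$; the quasi-optimality bound is then closed with the local $L^2$ projection onto $\Qe_h^{k+1}$, whose divergence error is controlled by $\|\nabla_h(\csig-\dtau)\|_0\lesssim h^{k+1}|\csig|_{k+2}$, with no conforming interpolant anywhere. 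Your duality argument for \eqref{err:L22} is, by contrast, the right mechanism and correctly identifies the commuting diagram \eqref{pro:commuting} as the sole place where $k\ge n$ genuinely enters; note only that it additionally requires full elliptic regularity of the dual elasticity problem (a convexity or smoothness hypothesis on $\Omega$ that the paper leaves implicit), and that consistency of \eqref{XGdiv2} at $(\csig,0,\cu,0)$, which you assert via the DG identities, is the same consistency the paper uses in \eqref{erreqdiv}.
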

Here discrete spaces $\Qe_h^{k}$, $\checkQe_h^k$, $\V_h^k$ and $\checkV_h^{k}$ are subspaces of $L^2(\Omega, \S)$,  $L^2(\cE, \S)$, $L^2(\Omega, \R^n)$ and $L^2(\cE, \R^n)$, respectively, and contain all piecewise polynomials of degree not larger than $k$.

The analysis in \cite{hong2020extended} shows that a special case of \eqref{XGdiv2} is hybridizable as presented below. 
Denote
$$
\Z_h = \{\due\in \V_h: \epsilon_h (\due)=0\},
$$
$$
\V_h^\perp = \{\due\in \V_h: (\due, \dve)=0,\ \forall \dve\in \Z_h\}.
$$
\begin{theorem}\label{th:hybride}
The formulation \eqref{XGdiv2}  with discrete spaces satisfying the assumptions in Lemma \ref{lm:elasticity} and condition \eqref{hybridchoice} 
can be decomposed into two sub-problems as:
\begin{enumerate}
\item[(I)] Local problems. For each element $K$, given $\tsig \in  \checkQe_{h}$, find $(\dsig^K, \due^K)\in \Qe_h\times \V_h^\perp$ such that for any $(\dtau, \dve)\in \Qe_h\times \V_h^\perp$
\begin{equation}\label{XGWG1e}
\left\{
\begin{array}{rll}
(A\dsig^K,\dtau)_K
- (\epsilon_h (\due^K), \dqh)_K
+\langle  \eta\dsig^K \bn, \dtau\bn\rangle_{\partial K}
&
=\langle  \eta\tsig\bn, \dtau\bn\rangle_{\partial K},
\\
-(\dsig^K, \epsilon_h (\dve))_K
&
=(f,\dve)_K-\langle   \tsig\bn, \dve\rangle_{\partial K}.
\end{array}
\right.
\end{equation}
Denote $W_{\Qe}: \checkQe_h\rightarrow \Qe_h$ and $W_{\V}: \checkQe_h\rightarrow \V_h^\perp$ by 
$$ 
W_{\Qe}(\tsig)|_K= \dsig^K\quad\text{ and }\quad  W_{\V}(\tsig)|_K= \due^K,
$$
respectively. 
\item[(II)] Global problem. Find $(\tsig, \due^0)\in \checkQe_{h}\times Z_h$ such that  for any $\dve^0\in Z_h$ and $\htau \in \checkQe_{h}$,
{\small
\begin{equation} \label{XGWG2e}
\left\{
\begin{array}{rl}
\langle \eta (\tsig - W_Q(\tsig))\bn, (\htau-W_Q(\htau))\bn \rangle_\PcT
+\langle \due^0, W_V(\htau) \rangle_\PcT
&= - (f, W_{\V}(\htau)),
\\
\langle   \hsig\bn, \dve^0\rangle_\PcT,
&
=(f,\dve^0).
\end{array}
\right.
\end{equation}}
\end{enumerate}
Let $(\tsig, \due^0)$ be the solution of \eqref{XGWG2e}, $(\dsig^K, \due^K)$ be the solution of \eqref{XGWG1e}, and $(\dsig, \hsig, \due, \hue)$ be the solution of \eqref{XGdiv2}. Then,
$$
\dsig^K=\dsig|_K, \ \due^K + \due^0= \due|_K, \ \tsig = \hsig + \{\dsig\}.
$$
\end{theorem}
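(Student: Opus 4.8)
The plan is to treat the asserted decomposition as a static-condensation (hybridization) argument: I would use the two penalty equations of \eqref{XGdiv2} to eliminate the edge unknowns $\hsig$ and $\hue$, and then exploit the total discontinuity of $\Qe_h$ and $\V_h$ to decouple the remaining equations element by element, keeping the single numerical trace $\tsig=\hsig+\{\dsig\}$ as the only globally coupled unknown. The special parameter choice \eqref{hybridchoice} ($\gamma=0$, $\eta=\tau^{-1}$) is exactly what forces the interelement surface terms to collapse cleanly, so it would be used at every step.

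First I would pass from the jump/average form to element-boundary form. Restricting the first equation of \eqref{XGdiv2} to test tensors $\dtau$ supported on a single $K$ and rewriting $(\due,\divh\dtau)$ through the DG identity \eqref{DGidentity2}, the averages $\{\due\}$ cancel against the penalty pairing, and the remaining surface contributions can be reorganized through \eqref{identities} into pairings of the type $\langle\,\cdot\,\bn,\cdot\,\rangle_{\partial K}$. Substituting $\hue=-\eta^{-1}[\dsig]$ from the fourth equation and inserting $\tsig\bn=\hsig\bn+\{\dsig\}\bn$, these terms reassemble into $\langle\eta\dsig\bn,\dtau\bn\rangle_{\partial K}-\langle\eta\tsig\bn,\dtau\bn\rangle_{\partial K}$, which is precisely the first line of \eqref{XGWG1e}. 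An analogous reduction of the second equation, now eliminating $\hsig=-\tau^{-1}[\due]$ and expressing the jump/average terms through $\tsig$, reproduces the second line of \eqref{XGWG1e} for all test functions $\dve\in\V_h^\perp$.

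Next I would deal with the rigid-body kernel. Since $\epsilon_h$ annihilates $\Z_h$, the localized second equation is solvable only on $\V_h^\perp$, which is why \eqref{XGWG1e} is posed there; because $\Z_h$, and hence $\V_h^\perp$, splits element by element, the local problem is genuinely local. I would then verify that \eqref{XGWG1e} is a well-posed saddle-point problem on $\Qe_h\times\V_h^\perp$: coercivity of $(A\,\cdot,\cdot)_K+\langle\eta\,\cdot\,\bn,\cdot\,\bn\rangle_{\partial K}$ is immediate from positivity of $A$ and $\eta$, while the inf-sup condition between $\Qe_h$ and $\V_h^\perp$ follows by taking $\dtau=\epsilon_h(\dve)\in\Qe_h^{k+1}$, which is nonzero for $0\neq\dve\in\V_h^\perp$ by the definition of $\V_h^\perp$. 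This makes $W_{\Qe}$ and $W_{\V}$ well defined, with the rigid-body component $\due^0\in\Z_h$ left undetermined at this stage.

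Finally I would assemble the global problem \eqref{XGWG2e} by testing in the two remaining directions. Summing the reduced first equation over $K$ against a trace variation $\htau\in\checkQe_h$ and inserting $\dsig=W_{\Qe}(\tsig)$ and $\due=W_{\V}(\tsig)+\due^0$ gives the first line of \eqref{XGWG2e}, with the residual $\langle\due^0,W_{\V}(\htau)\rangle_\PcT$ carrying the rigid-body coupling and $-(f,W_{\V}(\htau))$ collecting the particular-solution contribution of the local solver. Testing the second equation against $\dve^0\in\Z_h$, where $\epsilon_h(\dve^0)=0$ removes every volume term and leaves only the surface pairing, yields the solvability/conservation condition in the second line of \eqref{XGWG2e}. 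Reversing the construction—solving \eqref{XGWG2e} for $(\tsig,\due^0)$ and then \eqref{XGWG1e} for $(\dsig^K,\due^K)$—and checking directly that the reconstructed quadruple satisfies all four equations of \eqref{XGdiv2} then gives the claimed identities $\dsig^K=\dsig|_K$, $\due^K+\due^0=\due|_K$, and $\tsig=\hsig+\{\dsig\}$. I expect the main obstacle to be exactly the surface bookkeeping in the second step: showing, edge by edge over $\cEi$, $\cED$ and $\cEN$, that after eliminating $\hsig$ and $\hue$ the combination of jumps $[\dsig]$, $[\due]$, averages $\{\dsig\}$, $\{\dve\}$ and the trace $\tsig$ reassembles into the clean single-element pairings $\langle\tsig\bn,\cdot\,\rangle_{\partial K}$ with no leftover cross terms—a cancellation that hinges on the choice \eqref{hybridchoice}.
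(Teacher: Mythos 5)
You should know at the outset that the paper itself contains no proof of Theorem \ref{th:hybride}: it is stated as a result imported from \cite{hong2020extended} (``The analysis in \cite{hong2020extended} shows that a special case of \eqref{XGdiv2} is hybridizable as presented below''), so there is no in-paper argument to compare yours against. Judged on its own terms, your static-condensation strategy is the standard and correct architecture for such a result: eliminate edge unknowns, localize over the totally discontinuous spaces, split the displacement into $\Z_h$ and $\V_h^\perp$ components, and verify the equivalence in both directions. Your handling of the rigid-body kernel, the local well-posedness argument (coercivity plus $\epsilon_h(\dve)\neq 0$ for $0\neq\dve\in\V_h^\perp$), and the derivation of the second line of \eqref{XGWG2e} by testing the second equation of \eqref{XGdiv2} with $\dve^0\in\Z_h$ are all sound.

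Two concrete errors, however, break the derivation as written. First, your elimination formulas are inverted: the third and fourth equations of \eqref{XGdiv2} give $\hsig=-\tau\,[\due]$ and $\hue=-\eta\,[\dsig]$, not $\hsig=-\tau^{-1}[\due]$ and $\hue=-\eta^{-1}[\dsig]$ (and these identities hold exactly only because the jumps lie in the trace spaces --- $[\due]$ has degree at most $k$ and $[\dsig]$ at most $k+1$, so the $L^2$ projections onto $\checkQe_h^k$ and $\checkV_h^{k+1}$ drop out; you need to say this). Since $\eta=\tau^{-1}=\mathcal{O}(h^{-1})$ under \eqref{hybridchoice}, your versions place weights $\mathcal{O}(h)$ where $\eta$ must appear, and the surface bookkeeping cannot then reproduce the $\eta$-weighted terms of \eqref{XGWG1e}. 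Second, and more seriously, your accounting of which equations are spent where is inconsistent: you consume the third equation of \eqref{XGdiv2} to eliminate $\hsig$, yet you also keep $\tsig=\hsig+\{\dsig\}$ as a global unknown --- if that equation is used up in an elimination, no equation remains to determine the trace. The correct bookkeeping eliminates only $\hue$ (via the fourth equation), rewrites $\hsig$ as $\tsig-\{\dsig\}$ (a change of variables, legitimate since $\gamma=0$), turns the first equation and the second equation restricted to $\V_h^\perp$ into the local problems \eqref{XGWG1e}, and then recovers the first line of \eqref{XGWG2e} precisely from the third equation $\langle\tau^{-1}\hsig+[\due],\htau\rangle=0$ rewritten through the local solvers $W_{\Qe}$ and $W_{\V}$. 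Your proposal instead derives that line by ``summing the reduced first equation over $K$ against a trace variation $\htau$,'' which is vacuous: once $\dsig=W_{\Qe}(\tsig)$ and $\due=W_{\V}(\tsig)+\due^0$ are inserted, the first equation holds for every $\dtau\in\Qe_h$ by construction of the local solver, so it cannot generate a new global condition. Until the source of the first global equation is repaired, the equivalence claim --- and with it the identities $\dsig^K=\dsig|_K$, $\due^K+\due^0=\due|_K$, $\tsig=\hsig+\{\dsig\}$ --- is not established.
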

Theorem \ref{th:hybride} indicates that the discontinuous Galerkin formulation \eqref{XGdiv2} with this special choice \eqref{hybridchoice} of parameters can be written as a system of $\tsig$ and $\due^0$, which reduces the degree of freedom and the computational cost.

\subsection{Superclose analysis for linear elasticity problems}
This section considers the superclose result for linear elasticity problems \eqref{XGdiv2}. The  analysis for the superclose property requires two main ingredients: a conforming interpolation onto 
$\Qe_h$, and the commuting property of this interpolation.

Let $\Piue$ be  the standard $L^2$-projection onto $\V_h$, namely
$$
(\Piue \cu, \dve)=(\cu, \dve),\quad \forall \dve\in \V_h.
$$
For $\V_h=\V_h^k$, denote the $L^2$-projection by $\Piue^k$. The analysis for the linear elasticity problem requires the following assumption
\begin{Ass}\label{ass:commuting}
There exists a projection $\Pipe$ onto a conforming subspace $\Qe_h^c$ of $\Qe_h$, and   the projection $\Pipe$ admits the commuting diagram
\begin{equation}\label{pro:commuting}
{\rm div} \Pipe \ctau = \Piue {\rm div} \ctau,\quad \forall \ctau\in \Hs({\rm div}, \Omega).
\end{equation} 
\end{Ass}

Let $(\dsig, \hsig, \due, \hue)\in \Qe_h\times  \checkQe_h\times \V_h\times \checkV_h$ be the solution of the four-field formulation \eqref{XGdiv2}.
Define  
\begin{equation}\label{errdef}
\epu =\Piue \cu - \due, \quad \ep =\csig-\dsig.
\end{equation}

\begin{lemma}  
Suppose that the conditions (A1)-(A3) and the Assumption \ref{ass:commuting} hold. For any $\psie\in L^2(\Omega, \R^n)$, let $\phie$ be the solution of Problem \eqref{model:elasticity} with $f=\psie$, which implies that ${\rm div}(A^{-1}\epsilon(\phie))=\psie$. It holds that  
\begin{equation}\label{eq:decom}
\begin{aligned}
(\epu, \psie)  =& (\divh \ep, (I - \Piue)\phie) + (A\ep, (I - \Pipe)(A^{-1}\epsilon(\phie))) 
- \langle [\dsig], \{(\Piue - I)\phie\}\rangle
\\
& +\langle \hsig + [\dsig]\gamma^T, [\Piue\phie]\rangle. 
\end{aligned}
\end{equation}
\end{lemma}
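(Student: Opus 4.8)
The plan is to run a duality argument, exploiting that the elasticity operator is formally self-adjoint, so that the dual problem with load $\psie$ is again \eqref{model:elasticity}; thus $\phie$ satisfies $A(A^{-1}\epsilon(\phie))=\epsilon(\phie)$, $\operatorname{div}(A^{-1}\epsilon(\phie))=\psie$, together with $\phie=0$ on $\Gamma_D$ and $(A^{-1}\epsilon(\phie))\bn=0$ on $\Gamma_N$. Writing $\psie=\operatorname{div}(A^{-1}\epsilon(\phie))$ and using $\epu\in\V_h$, I would first replace $\psie$ by its $L^2$-projection and invoke the commuting diagram \eqref{pro:commuting}: since $A^{-1}\epsilon(\phie)\in\Hs(\operatorname{div},\Omega)$ is symmetric, $\Piue\psie=\Piue\operatorname{div}(A^{-1}\epsilon(\phie))=\operatorname{div}\Pipe(A^{-1}\epsilon(\phie))=\divh\Pipe(A^{-1}\epsilon(\phie))$, the last step because $\Pipe$ maps into the conforming space $\Qe_h^c$. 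Hence $(\epu,\psie)=(\epu,\divh\Pipe(A^{-1}\epsilon(\phie)))$.

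Next I would eliminate the discrete unknowns. Testing the first equation of \eqref{XGdiv2} with $\dtau=\Pipe(A^{-1}\epsilon(\phie))$, whose jump vanishes because it is conforming and $\Pipe$ preserves the Neumann trace on $\Gamma_N$, gives $(\due,\divh\Pipe(A^{-1}\epsilon(\phie)))=-(A\dsig,\Pipe(A^{-1}\epsilon(\phie)))$. For the exact part I would integrate by parts on $\Omega$, using $\cu=0$ on $\Gamma_D$, the preserved Neumann condition, and $\epsilon(\cu)=A\csig$, to get $(\cu,\operatorname{div}\Pipe(A^{-1}\epsilon(\phie)))=-(A\csig,\Pipe(A^{-1}\epsilon(\phie)))$. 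Since $\divh\Pipe(A^{-1}\epsilon(\phie))\in\V_h$, the projection property turns $(\Piue\cu,\cdot)$ into $(\cu,\cdot)$, and subtracting the two relations collapses everything to $(\epu,\psie)=-(A\ep,\Pipe(A^{-1}\epsilon(\phie)))$.

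To recover the four stated terms I would split $\Pipe=I-(I-\Pipe)$; the piece $(A\ep,(I-\Pipe)(A^{-1}\epsilon(\phie)))$ is already the second term. For the remaining $-(A\ep,A^{-1}\epsilon(\phie))=-(\ep,\epsilon(\phie))$ I would integrate by parts elementwise to obtain $(\divh\ep,\phie)-\langle\ep\bn,\phie\rangle_\PcT$, then use the facet identity \eqref{identities} together with $[\phie]=0$ (continuity of $\phie$) and $[\ep]=-[\dsig]$ (normal continuity of $\csig$, including $\csig\bn=0$ on $\Gamma_N$) to rewrite the facet term as $\langle[\dsig],\{\phie\}\rangle$. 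Finally I would split $\phie=(I-\Piue)\phie+\Piue\phie$ in $(\divh\ep,\phie)$: the $(I-\Piue)$ part is the first term, while for $\Piue\phie\in\V_h$ I would test the second equation of \eqref{XGdiv2} and subtract $(\divh\csig,\Piue\phie)=(f,\Piue\phie)$, giving $(\divh\ep,\Piue\phie)=-\langle[\dsig],\{\Piue\phie\}\rangle+\langle\hsig+[\dsig]\gamma^T,[\Piue\phie]\rangle$; combining with the $\langle[\dsig],\{\phie\}\rangle$ term produces exactly $-\langle[\dsig],\{(\Piue-I)\phie\}\rangle$ and the $\hsig$ facet term.

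The main obstacle I expect is the careful bookkeeping of the facet and boundary contributions: I must check that every term generated by the two distinct integrations by parts, the global conforming one against $\Pipe(A^{-1}\epsilon(\phie))$ and the elementwise one against $\ep$, either cancels or matches the prescribed average/jump pairings. This hinges on symmetry of $A^{-1}\epsilon(\phie)$ so the commuting diagram applies, on $\Pipe$ preserving the Neumann trace so the conforming test tensor has no jump on $\Gamma_N$, and on the identity $[\ep]=-[\dsig]$ on all facets together with the $\Gamma_D/\Gamma_N$ jump conventions. Getting these signs and boundary cases right, rather than any single estimate, is the delicate part.
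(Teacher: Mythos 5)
Your proposal is correct and follows essentially the same route as the paper: both use the commuting diagram to replace $\psie$ by $\divh\Pipe(A^{-1}\epsilon(\phie))$, test the first equation with the conforming tensor $\Pipe(A^{-1}\epsilon(\phie))$ (whose jump vanishes), split off $(A\ep,(I-\Pipe)(A^{-1}\epsilon(\phie)))$, integrate by parts elementwise using $[\phie]=0$ and $[\ep]=-[\dsig]$, and test the second equation with $\Piue\phie$ to produce the remaining facet terms. The only cosmetic difference is that the paper packages the consistency step into explicit error equations \eqref{erreqdiv} before substituting, whereas you subtract the exact and discrete relations inline.
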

\begin{proof}
Note that the formulation \eqref{XGdiv2} is consistant, namely, $(\csig, 0, \cu, 0)$ satisfies \eqref{XGdiv2}. 
Let
\begin{equation}\label{hatdef2}
\tsig =  \{\dsig\} + [\dsig]\gamma^T+\hsig, \qquad
\tue = \{\due\} - (\gamma\cdot \bn)[\due]\bn + \hue
\end{equation}
with $\gamma\in \mathbb{R}^{n\times 1}$. 
By the DG identity \eqref{DGidentity2}, the formulation \eqref{XGdiv2} and $\divh \Qe_h\subset \V_h$,
\begin{equation}\label{erreqdiv}
\left\{
\begin{aligned}
(A\ep, \dtau) + (\epu, \divh \dtau) &= \langle \{\cu-\tue\}, [\dtau]\rangle &\forall \dtau\in \Qe_h
\\
-(\divh \ep, \dve) &= \langle \{\dsig - \tsig\}, [\dve]\rangle + \langle [\dsig], \{\dve\}\rangle &\forall \dve\in \V_h.
\end{aligned}
\right.
\end{equation}
For any $\psie\in \V_h$, since $\epu\in \V_h$, by the commuting diagram \eqref{pro:commuting},
\begin{equation}
\begin{aligned}
(\epu, \psie)&= (\epu, {\rm div} \Pipe (A^{-1}\epsilon(\phie))).
\end{aligned}
\end{equation} 
Let  $\dtau= \Pipe (A^{-1}\epsilon(\phie))$ in \eqref{erreqdiv}. It follows from $[\dtau]=0$ that
\begin{equation}\label{eq:1}
\begin{aligned}
(\epu, \psie)&= -(A\ep, \Pipe (A^{-1}\epsilon(\phie))) 
=  -(\ep, \epsilon(\phie)) + (A\ep, (I - \Pipe)(A^{-1}\epsilon(\phie))). 
\end{aligned}
\end{equation} 
Let $\dve=\Piue\phie$ in \eqref{erreqdiv}. It holds that
\begin{equation}\label{eq:2}
\begin{aligned}
(\divh \ep, \phie) =& (\divh \ep, (I - \Piue)\phie)  
- \langle \{\dsig - \tsig\}, [\Piue\phie]\rangle - \langle [\dsig], \{\Piue\phie\}\rangle.
\end{aligned}
\end{equation} 
A combination of \eqref{eq:1}, \eqref{eq:2} and
\begin{equation}
\begin{aligned}
(\ep, \epsilon(\phie)) &= -(\divh \ep, \phie) + \langle [\ep], \{\phie\}\rangle
\end{aligned}
\end{equation} 
gives
\begin{equation}\label{eq:3}
\begin{aligned}
(\epu, \psie)  =& (\divh \ep, (I - \Piue)\phie) + (A\ep, (I - \Pipe)(A^{-1}\epsilon(\phie)) )  
- \langle [\dsig], \{\Piue\phie\}\rangle  
\\
&
- \langle [\ep], \{\phie\}\rangle
- \langle \{\dsig - \tsig\}, [\Piue\phie]\rangle.
\end{aligned}
\end{equation} 
According to \eqref{errdef} and \eqref{hatdef2},
\begin{equation}\label{eq:4}
\begin{aligned}
\langle [\dsig], \{\Piue\phie\}\rangle + \langle [\ep], \{\phie\}\rangle = &\langle [\dsig], \{(\Piue - I)\phie\}\rangle,
\\
\langle \{\dsig - \tsig\}, [\Piue\phie]\rangle =& -\langle \hsig + [\dsig]\gamma^T, [\Piue\phie]\rangle.
\end{aligned}
\end{equation} 
Substituting \eqref{eq:4} into \eqref{eq:3},
\begin{equation*} 
\begin{aligned}
(\epu, \psie)  =& (\divh \ep, (I - \Piue)\phie) + (A\ep, (I - \Pipe)(A^{-1}\epsilon(\phie))) 
- \langle [\dsig], \{(\Piue - I)\phie\}\rangle
\\
& +\langle \hsig + [\dsig]\gamma^T, [\Piue\phie]\rangle,
\end{aligned}
\end{equation*} 
which completes the proof.
\end{proof}

It was analyzed in \cite{hu2015finite} that there exists such a conforming interpolation $\Pipe$ with commuting property \eqref{pro:commuting} for $k\ge n$, and it holds that
\begin{equation}\label{picest}
\| \csig - \Pipe \csig\|_0\lesssim h^{k+2}|\csig|_{k+2}.
\end{equation}
The following theorem shows that $\|\epu\|_{0}$ converges at the rate $k+3$ if solutions are smooth enough. The accuracy is presented in the form of $h^{\min(2k+2, k+3)}$ to be consistent with the result in Theorem \ref{th:super1} for scalar elliptic problems.
\begin{theorem}\label{th:super2}
Suppose $\csig\in H^{k+2}(\Omega, \S)$, $\cu\in H^{k+1}(\Omega, \R^n) (k\ge n)$, and  
$(\dsig, \hsig, \due, \hue)$, which is in $ \Qe_h^{k+1}\times  \checkQe_h^k\times \V_h^k\times \checkV_h^{k+1}$, is the solution of the four-field formulation \eqref{XGdiv2} with $\eta=\left(\rho h_{e}\right)^{-1}, \tau \cong \eta^{-1}=\rho h_{e}$. It holds that
\begin{equation}\label{superclose}
\|\Piue^k \cu - \due\|_{0, \Omega}\lesssim h^{\min(2k+2, k+3)}(|\csig|_{k+2} + |\cu|_{k+1}).
\end{equation}
\end{theorem}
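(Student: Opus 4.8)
The plan is to run a duality (Aubin--Nitsche) argument built on the decomposition \eqref{eq:decom}. Since $\epu=\Piue^k\cu-\due\in\V_h$, I would test \eqref{eq:decom} against $\psie=\epu$ itself, so that the left-hand side becomes $\|\epu\|_{0,\Omega}^2$. Here $\phie$ is the solution of the dual elasticity problem with data $\epu$, i.e. ${\rm div}(A^{-1}\epsilon(\phie))=\epu$, and elliptic regularity gives $\|\phie\|_2+\|A^{-1}\epsilon(\phie)\|_1\lesssim\|\epu\|_{0,\Omega}$. The whole proof then reduces to showing that each of the four terms on the right-hand side of \eqref{eq:decom} is bounded by $h^{k+3}(|\csig|_{k+2}+|\cu|_{k+1})\,\|\epu\|_{0,\Omega}$; dividing by $\|\epu\|_{0,\Omega}$ yields the claim, noting that $\min(2k+2,k+3)=k+3$ when $k\ge n\ge 2$.

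For the first term $(\divh\ep,(I-\Piue)\phie)$, I would use that $\divh\dsig\in\V_h$ (condition (A1)) is $L^2$-orthogonal to $(I-\Piue)\phie$, together with $\divh\csig=f$, to write $(\divh\ep,(I-\Piue)\phie)=((I-\Piue)f,(I-\Piue)\phie)$. Since $f=\divh\csig\in H^{k+1}$, one has $\|(I-\Piue)f\|_0\lesssim h^{k+1}|\csig|_{k+2}$, while the crucial gain comes from the second-order approximation $\|(I-\Piue)\phie\|_0\lesssim h^2\|\phie\|_2$, valid because $k\ge n\ge 2\ge 1$ so that $\V_h^k$ contains the linear polynomials; the product is of order $h^{k+3}\|\epu\|_{0,\Omega}$. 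For the second term $(A\ep,(I-\Pipe)(A^{-1}\epsilon(\phie)))$, I would invoke the improved $L^2$ stress estimate \eqref{err:L22}, $\|\ep\|_A\lesssim h^{k+2}(|\csig|_{k+2}+|\cu|_{k+1})$ (which needs $k\ge n$), together with the first-order bound $\|(I-\Pipe)(A^{-1}\epsilon(\phie))\|_0\lesssim h\|A^{-1}\epsilon(\phie)\|_1\lesssim h\|\phie\|_2$ from the approximation properties of $\Pipe$ underlying \eqref{picest}, again producing order $h^{k+3}\|\epu\|_{0,\Omega}$.

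The two edge terms are handled by the scaling of $\eta,\tau$ and the fact that exact quantities carry no jumps. Since $\csig\in\Hs(\operatorname{div},\Omega)$ gives $[\csig]=0$, one has $[\dsig]=-[\ep]$, hence $\|[\dsig]\|_\cE\lesssim h^{1/2}\|\eta^{1/2}[\ep]\|_\cE\lesssim h^{1/2}\cdot h^{k+1}=h^{k+3/2}$ by \eqref{XGerr2}; similarly $\|\hsig\|_\cE\lesssim h^{1/2}\|\tau^{-1/2}\hsig\|_\cE\lesssim h^{k+3/2}$. On the test side, continuity of $\phie$ gives $[\phie]=0$, so $[\Piue\phie]=[(\Piue-I)\phie]$, and a trace inequality together with the second-order approximation gives $\|\{(I-\Piue)\phie\}\|_\cE+\|[\Piue\phie]\|_\cE\lesssim h^{3/2}\|\phie\|_2$. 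Multiplying, and using boundedness of $\gamma$ for the $[\dsig]\gamma^T$ contribution, both $\langle[\dsig],\{(\Piue-I)\phie\}\rangle$ and $\langle\hsig+[\dsig]\gamma^T,[\Piue\phie]\rangle$ are of order $h^{k+3}\|\epu\|_{0,\Omega}$.

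The main obstacle is organizing the powers of $h$ so that the duality actually delivers the extra order: the superconvergence hinges on the simultaneous availability of the second-order $L^2$ approximation in the first term and the sharp $L^2$ stress estimate \eqref{err:L22} in the second term, both of which rely on the conforming interpolation $\Pipe$ with the commuting diagram \eqref{pro:commuting}, which in turn is only known to exist for $k\ge n$ (cf. \cite{hu2015finite} and \eqref{picest}). The edge terms are comparatively routine once one exploits $[\csig]=0$ and $[\phie]=0$ to rewrite discrete jumps as error jumps controlled by the energy norm \eqref{XGerr2}.
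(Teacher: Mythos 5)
Your proposal is correct and follows essentially the same route as the paper: a duality argument built on the decomposition \eqref{eq:decom}, with the same four term-by-term bounds using the energy estimate \eqref{XGerr2}, the sharp $L^2$ stress estimate \eqref{err:L22} (valid for $k\ge n$ via the commuting projection $\Pipe$), the first-order approximation property of $\Pipe$, and trace inequalities for the two edge terms. The only cosmetic differences are that you test \eqref{eq:decom} with $\psie=\epu$ rather than taking the supremum over $\psie\in L^2(\Omega,\R^n)$ as the paper does, and you bound the divergence term via projection orthogonality, $(\divh\ep,(I-\Piue)\phie)=((I-\Piue)f,(I-\Piue)\phie)$, where the paper simply applies Cauchy--Schwarz with $\|\divh\ep\|_0\lesssim h^{k+1}$ from \eqref{XGerr2}; both variants deliver the same $h^{k+3}$ rate.
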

\begin{proof} 
Since $\Piue^k$ is the $L^2$-projection onto $\V_h^{k}$,
\begin{equation}\label{eq:errorPi}
\|(I - \Piue^k)\cv\|_{0, K}\lesssim h^{k+1}|\cv|_{k+1, K},\quad \forall \cv\in H^{k+1}(K, \R^n).
\end{equation}
By the triangle inequality, \eqref{XGerr2} and \eqref{eq:errorPi},
\begin{equation}\label{term2}
\left |(\divh \ep, (I - \Piue^k)\phie) \right |\le \|\divh \ep\|_{0}\|(I - \Piue^k)\phie\|_{0}\lesssim h^{\min(2k+2, k+3)}|\phie|_2(|\csig|_{k+2} + |\cu|_{k+1}).
\end{equation}  
The $L^2$ error estimate of $\|\ep\|_{0}$ in \eqref{err:L22}, \eqref{picest} and \eqref{eq:errorPi} indicate that
\begin{equation}\label{term3}
\begin{aligned}
\left |  (A\ep, (I - \Pipe)(A^{-1}\epsilon(\phie)) \right |&\le \|A\ep\|_{0} \|(I - \Pipe)(A^{-1}\epsilon(\phie))\|_{0}
\\
&\lesssim h^{\min(2k+4, k+3)}|\phie|_2(|\csig|_{k+2} + |\cu|_{k+1}).
\end{aligned}
\end{equation}  
It follows from the error estimates \eqref{XGerr2}, \eqref{eq:errorPi} and trace inequality that
\begin{equation}\label{term4}
\begin{aligned}
\left |\langle [\dsig], \{(I - \Piue^k)\phie\}\rangle \right |&\le (\eta h)^{-1/2}\|\eta^{1/2}[\dsig]\|_{\cE}\|(I - \Piue^k)\phie\|_{0}
\\
&\lesssim h^{\min(2k+2, k+3)}|\phie|_2(|\csig|_{k+2} + |\cu|_{k+1}),
\end{aligned}
\end{equation}
\begin{equation}\label{term5}
\begin{aligned}
\left | \langle \hsig + [\dsig]\gamma^T, [\Piue^k\phie]\rangle \right |&\le h^{-{1\over 2}}\|\hsig + [\dsig]\gamma^T\|_\cE\|(I - \Piue^k)\phie\|_0
\\
&\lesssim  h^{\min(2k+2, k+3)}|\phie|_2(|\csig|_{k+2} + |\cu|_{k+1}).
\end{aligned}
\end{equation} 
A substitution of \eqref{term2}, \eqref{term3}, \eqref{term4} and \eqref{term5} 
into \eqref{eq:decom} leads to  
\begin{equation}
\left |(\epu, \psie)\right |\lesssim h^{\min(2k+2, k+3)}|\phie|_2(|\csig|_{k+2} + |\cu|_{k+1}).
\end{equation}
Since $|\phie|_2\lesssim \|\psie\|_0$,
\begin{equation}
\|\epu\|_0=\sup_{0\neq \psie\in L^2(\Omega)}{(\epu, \psie)\over \|\psie\|_0}\lesssim h^{\min(2k+2, k+3)}(|\csig|_{k+2} + |\cu|_{k+1}),
\end{equation}
which completes the proof.
\end{proof}

\begin{remark}
Since the four-field extended Galerkin method recovers most of discontinuous Galerkin methods in literature \cite{XGHong,hong2020extended}, Theorem \ref{th:super1} and Theorem \ref{th:super2} imply that most of the $H({\rm div})$-based discontinuous Galerkin methods in literature \cite{cockburn2009unified,hong2019unified,MixLDGWu,hong2020extended} admit this superclose property.
\end{remark}



\subsection{Postprocess technique for linear elasticity problems}\label{sec:postela}

Consider the linear elasticity problems \eqref{model:elasticity}.
Denote the rigid motion, the kernel of the symmetric gradient operator $\epsilon(\cdot)$, by
$$
{\rm RM}(K, \R^2)=\mbox{span}\left\{\begin{pmatrix} 1\\0\end{pmatrix},
\begin{pmatrix} 0\\1\end{pmatrix},
\begin{pmatrix} y\\-x\end{pmatrix}\right\}.
$$
For any $\cv\in L^2(K, \R^2)$, define $L^2$-projection onto ${\rm RM}(K, \R^2)$ by $ \Piue^*v$, namely,
$$
\int_K (I - \Piue^*)\cv\cdot \dw dx=0,\quad \forall\ \dw\in {\rm RM}(K, \R^2).
$$
Note that for any positive integer $k\ge 1$,
\begin{equation}\label{projectast}
\Piue^* \Piue^k\cv=\Piue^*\cv.
\end{equation}

Consider the $H({\rm div})$-based four-field formulation \eqref{XGdiv2} with $\Qe_h=\Qe_h^{k+1}$, $\checkQe_h=\checkQe_h^k$, $\V_h= \V_h^k$ and $\checkV_h= \checkV_h^{k+1}$. Lemma \ref{lm:elasticity} guarantees the wellposedness of this problem.
We introduce a new postprocess procedure for linear elasticity problem. Let $\due^*\in \V_h^{k+2}$ be the solution of the following problem
\begin{equation}\label{post:elasticity}
\left\{
\begin{aligned}
(\epsilon (\due^*), \epsilon(\dve))_K&=(A\dsig, \epsilon(\dve))_K,& \forall \dve\in P_{k+2}(K, \R^2)\\
\Piue^*(\due^* - \due)&=0,&
\end{aligned}
\right.
\end{equation} 
where  $(\dsig, \hsig, \due, \hue)$ is the solution of the mixed discontinuous Galerkin formulation  \eqref{XGdiv2}.  

The following theorem illustrates that the postprocessing solution $\due^*$ admits a higher accuracy compared to the approximation $\due$.
\begin{theorem}\label{th:post2}
Suppose $\csig\in \Hs^{k+2}(\Omega)$, $\cu\in H^{k+3}(\Omega) (k\ge n)$, and  
$(\dsig, \hsig, \due, \hue)\in \Qe_h^{k+1}\times \checkQe_h^k\times \V_h^k\times \checkV_h^{k+1}$ is the solution of the four-field formulation \eqref{XGdiv2} with $\eta=\left(\rho h_{e}\right)^{-1}, \tau \cong \eta^{-1}=\rho h_{e}$. It holds that
\begin{equation*} 
\|\cu-\due^*\|_0 \lesssim h^{\min(2k+2, k+3)}|\cu|_{k+3}.
\end{equation*} 

\end{theorem}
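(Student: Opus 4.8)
The plan is to bound $\|\cu - \due^*\|_0$ by splitting it via the triangle inequality into pieces governed by the postprocessing equation \eqref{post:elasticity}, the superclose estimate of Theorem \ref{th:super2}, and the $L^2$ estimate of the stress error \eqref{err:L22}. The natural comparison object is $\Piue^{k}\cu$ (or a higher-order projection), so I would first set $\dve = (I - \Piue^*)(\Piue^{k+2}\cu - \due^*)$ as the test-and-target function, exploiting that the rigid-motion projection $\Piue^*$ is annihilated by $\epsilon(\cdot)$ so that $\epsilon_h(\dve) = \epsilon_h(\Piue^{k+2}\cu - \due^*)$. Because $\Piue^*\dve = 0$ removes the kernel of $\epsilon$, a Korn-type / Poincar\'e inequality on the shape-regular element $K$ gives $\|\dve\|_{0,K} \lesssim h_K |\dve|_{1,h,K} \lesssim h_K \|\epsilon(\dve)\|_{0,K}$, which is the elasticity analogue of \eqref{post1}. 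This reduces the whole estimate to controlling $\|\epsilon_h(\dve)\|_0$.

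Next I would test \eqref{post:elasticity} against this $\dve$ and substitute $A\dsig = \epsilon(\cu) - A\ep$ together with $\epsilon(\cu) = A\csig$, so that
\begin{equation*}
\|\epsilon_h(\dve)\|_0^2 = (\epsilon_h(\Piue^{k+2}\cu - \due^*), \epsilon_h(\dve))
= (\epsilon_h(\Piue^{k+2}\cu), \epsilon_h(\dve)) - (A\dsig, \epsilon_h(\dve)).
\end{equation*}
Writing $A\dsig = A\csig - A\ep = \epsilon(\cu) - A\ep$ turns the right-hand side into $(\epsilon_h(\Piue^{k+2}\cu - \cu), \epsilon_h(\dve)) + (A\ep, \epsilon_h(\dve))$. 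The first term is controlled by the standard approximation bound $|(\Piue^{k+2} - I)\cu|_{1,h} \lesssim h^{k+2}|\cu|_{k+3}$, and the second by the Cauchy--Schwarz inequality together with the superconvergent stress estimate $\|A\ep\|_0 = \|\csig - \dsig\|_A \lesssim h^{k+2}(|\csig|_{k+2} + |\cu|_{k+1})$ from \eqref{err:L22}, which is exactly why the hypothesis $k \ge n$ is needed. Both terms therefore yield $\|\epsilon_h(\dve)\|_0 \lesssim h^{k+2}|\cu|_{k+3}$, hence $\|\dve\|_0 \lesssim h^{k+3}|\cu|_{k+3}$ after the Korn/Poincar\'e step.

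To assemble the final bound I would use the decomposition
\begin{equation*}
\cu - \due^* = (\cu - \Piue^{k+2}\cu) + \Piue^*(\Piue^{k+2}\cu - \due^*) + (I - \Piue^*)(\Piue^{k+2}\cu - \due^*),
\end{equation*}
where the third summand is exactly $\dve$. The first term is $O(h^{k+3})$ by approximation. For the middle term, the second equation in \eqref{post:elasticity} gives $\Piue^*\due^* = \Piue^*\due$, and \eqref{projectast} lets me replace $\Piue^*\Piue^{k+2}\cu$ by $\Piue^*\Piue^{k}\cu$, so that $\Piue^*(\Piue^{k+2}\cu - \due^*) = \Piue^*(\Piue^{k}\cu - \due)$, which is bounded by $\|\Piue^k\cu - \due\|_0 \lesssim h^{\min(2k+2,k+3)}(|\csig|_{k+2}+|\cu|_{k+1})$ directly from the superclose Theorem \ref{th:super2}. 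Collecting the three contributions gives the asserted rate $h^{\min(2k+2,k+3)}|\cu|_{k+3}$.

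The main obstacle I anticipate is the Korn/Poincar\'e step that passes from $\|\dve\|_0$ to $h\,\|\epsilon_h(\dve)\|_0$: unlike the scalar case \eqref{post1}, where subtracting the piecewise-constant mean suffices, here one must control a full vector field modulo rigid motions, and it is essential that $\Piue^*$ projects onto the entire kernel $\mathrm{RM}(K,\R^2)$ rather than merely onto constants, so that $\epsilon(\dve)$ alone (with no separate control of the skew part of $\nabla\dve$) bounds $\dve$ on each element. Establishing this element-wise Korn inequality with an $h$-explicit constant that is uniform over the shape-regular family, and checking the compatibility of $\Piue^*$ with $\Piue^k$ through \eqref{projectast} so that the middle term collapses onto the superclose estimate, are the two places where the argument genuinely differs from the elliptic proof of Theorem \ref{th:post1}.
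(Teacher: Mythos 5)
Your proposal is correct and follows essentially the same route as the paper's proof: you derive the same error equation from \eqref{post:elasticity} and $A\csig=\epsilon(\cu)$ to bound $\|\epsilon_h(\dve)\|_0$ by $h^{k+2}|\cu|_{k+3}$ via \eqref{err:L22}, split $\Piue^{k+2}\cu-\due^*$ into its rigid-motion part (controlled through \eqref{projectast}, $\Piue^*\due^*=\Piue^*\due$, and Theorem \ref{th:super2}) and its $\Piue^*$-free part (controlled by the element-wise Korn/scaling estimate $\|(I-\Piue^*)\dve\|_0\lesssim h\|\epsilon(\dve)\|_0$), and assemble by the triangle inequality. The only cosmetic difference is that you build the $(I-\Piue^*)$ projection into the definition of $\dve$ at the outset, whereas the paper defines $\dve=\due^*-\Piue^{k+2}\cu$ and performs the same splitting afterward; the scaling step you flag as the main obstacle is exactly the paper's appeal to $(I-\Piue^*)\dw=0$ for $\dw\in{\rm RM}(K,\R^2)$ plus a scaling argument.
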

\begin{proof}
A combination of \eqref{model:elasticity} and \eqref{post:elasticity} gives
\begin{align}\label{eq:epsilon1}
(\epsilon (\due^*) - \epsilon (\Piue^{k+2}\cu), \epsilon(\dve)) &=(A(\dsig - \csig), \epsilon(\dve)) 
+ (\epsilon (\cu) - \epsilon (\Piue^{k+2}\cu), \epsilon(\dve)) 
\end{align}
According to Lemma \ref{lm:elasticity} and the definition of $\Piue^{k+2}$,
$$
\|\dsig - \csig\|_A\lesssim h^{k+2}(|\csig|_{k+2} + |\cu|_{k+1}),\qquad \|\epsilon (\cu) - \epsilon (\Piue^{k+2}\cu)\|_0\lesssim h^{k+2}|\cu|_{k+3}.
$$
Let $\dve = \due^* - \Piue^{k+2}\cu$ in \eqref{eq:epsilon1}. It follows from \eqref{err:L22} that
\begin{equation}\label{eq:epsilon2}
\|\epsilon (\dve)\|_0\le \|A(\dsig - \csig)\|_{0} + \|\epsilon (\cu) - \epsilon (P_h^{k+2}\cu)\|_0\lesssim h^{k+2}|\cu|_{k+3}.
\end{equation}
Since $k\ge n\ge 1$, by \eqref{projectast} and Theorem \ref{th:super2},
\begin{equation}\label{eq:posth1}
\|\Piue^*\dve\|_0=\|\Piue^*(\due - \Piue^{k}\cu)\|_{0}\le \|\due - \Piue^{k}\cu\|_0\lesssim h^{k+3}|\cu|_{k+3}.
\end{equation} 
Since $(I - \Piue^*)\dw=0$ for any $\dw\in {\rm RM}(K, \R^2)$, it follows from \eqref{eq:epsilon2} and the scaling technique that 
\begin{equation}\label{eq:posth2}
\|(I - \Piue^*)\dve\|_{0}\lesssim h\|\epsilon(\dve)\|_{0}\lesssim h^{k+3}|\cu|_{k+3}.
\end{equation} 
A combination of \eqref{eq:posth1} and \eqref{eq:posth2} gives 
$$
\|\dve\|_0\le \|\Piue^*\dve\|_{0} + \|(I - \Piue^*)\dve\|_{0} \lesssim h^{k+3}|\cu|_{k+3}.
$$
Consequently,
$$
\|\due^*- \cu\|_{0}\le \|\dve\|_{0} + \|\Piue^{k+2}\cu - \cu\|_{0}\lesssim h^{k+3}|\cu|_{k+3},
$$
which completes the proof.
\end{proof}
\begin{remark}
For the case $k<n$, the analysis in \cite{hong2020extended} indicates that as long as  Assumption \ref{ass:commuting} holds with 
\begin{equation}
\| \ctau - \Pipe \ctau\|_0 \lesssim h^{k+2}|\ctau|_{k+2}, \ \forall \ctau \in H^{k+2}(\Omega, \S),
\end{equation}
there exists
$$
\|\csig - \dsig\|_0\lesssim h^{k+2} (|\csig|_{k+2} + |\cu|_{k+1}),
$$
where $(\dsig, \hsig, \due, \hue)$ is the solution of the discontinuous Galerkin formulation \eqref{XGdiv2} in $ \Qe_h^{k+1}\times  \checkQe_h^k\times \V_h^k\times \checkV_h^{k+1}$. This means that Assumption \ref{ass:commuting} guarantees the superclose property \eqref{superclose}, which implies  the superconvergence of the postprocessed approximation $\due^*$ following the analysis of Theorem \ref{th:post2}. The numerical results in Table \ref{tab:elasticity3} and  \ref{tab:elasticity4} show that $\due^*$ converges at the same rate as $\due$ for $k<n$. This implies that Assumption \ref{ass:commuting} is not true for $k<n$, namely there exists no such $H({\rm div})$-conforming projection for low order discrete spaces.
\end{remark}

\begin{remark} 
For a general mixed discontinuous Galerkin formulation \eqref{XGdiv2} with the conditions (A1)-(A3), if there holds
$$
\|\csig - \dsig \|_{0, \Omega}\lesssim h^r \inf_{\dqh\in \Qe_h, \dve\in \V_h} (\|\csig - \dtau\|_{\rm div, h} + \|\cu-\dve\|_{0, h}),
$$
and 
$$
\|(I-\Piue)\phie\|_0\lesssim h^t|\phie|_2,\quad \|(I-\Pipe )(A^{-1}\epsilon(\phie))\|_0\lesssim h^s|\phie|_2.
$$ 
Then, a similar analysis proves the superconvergence result
$$
\|\cu-\due^*\|_0\lesssim h^{\min(s+r, t)} \inf_{\dtau\in \Qe_h, \dve\in \V_h} (\|\csig - \dtau\|_{\rm div, h} + \|\cu-\dve\|_{0, h}).
$$
\end{remark}

\section{Numerical Tests}\label{sec:num}

In this section, some numerical experiments in 2D are presented to verify the estimate provided in Theorem \ref{th:super1}, \ref{th:post1}, \ref{th:super2}  and \ref{th:post2}. 

\subsection{Example 1: scalar elliptic problems}
We consider the model problem \eqref{model:elliptic} on the unit square $\Om=(0,1)^2$ with 
$$
u= \sin (\pi x)\sin(\pi y),
$$
and set $f$ and $g$ to satisfy the above exact solution of \eqref{model:elliptic}.
The domain is partitioned by uniform triangles. The level one triangulation $\mathcal{T}_1$ consists of two right triangles, obtained by cutting the unit square with a north-east line. Each triangulation $\mathcal{T}_i$ is refined into a half-sized triangulation uniformly, to get a higher level triangulation $\mathcal{T}_{i+1}$.  

Consider the four-field formulation \eqref{XGdiv} with $\eta=h_e^{-1}$, $\tau=h_e$, $\gamma=1$ and 
$$
\Q_h=\Q_h^{\alpha_1},\ \checkQ_h=\checkQ_h^{\alpha_2},\ V_h = V_h^{\alpha_3}, \ \check V_h= \check V_h^{\alpha_4},
$$
where $\alpha=(\alpha_1, \alpha_2, \alpha_3, \alpha_4)$ satisfies $\alpha_1=\alpha_4=k+1$, $\alpha_2=\alpha_3=k$ for $k=0$, $1$ and $2$. According to Lemma \ref{lm:elliptic}, these formulations are well posed. Denote the corresponding solution by $(\dph, \tp, \du, \tu)$.  

\begin{table}[htbp] 
\small
  \centering
    \begin{tabular}{|c|c|c|c|c|c|c|c|c|}
\hline
 & $\|u - \du\|_{0}$  &  rates     &$\|\du - \Piu u\|_{0}$ &   rates      & $\|u - u_{1, h}^*\|_{0}$  &   rates     &$\|u - u_{2, h}^*\|_{0}$    &  rates   \\\hline
$\cTh_3$ & 1.45E-01 & 0.92  & 6.81E-02 & 0.93  & 6.91E-02 & 1.00  &  6.87E-02 & 1.04 \\\hline
$\cTh_4$ & 6.89E-02 & 1.08  & 2.26E-02 & 1.59  & 2.27E-02 & 1.61  &  2.26E-02 & 1.60  \\\hline
$\cTh_5$ & 3.33E-02 & 1.05  & 6.24E-03 & 1.86  & 6.25E-03 & 1.86  &   6.24E-03 & 1.86 \\\hline
$\cTh_6$ & 1.64E-02 & 1.02  & 1.62E-03 & 1.95  & 1.62E-03 & 1.95  &  1.62E-03 & 1.95 \\\hline
$\cTh_7$ & 8.19E-03 & 1.00  & 4.11E-04 & 1.98  & 4.11E-04 & 1.98  & 4.11E-04 & 1.98 \\\hline
$\cTh_8$ & 4.09E-03 & 1.00  & 1.03E-04 & 1.99  & 1.03E-04 & 1.99  &  1.03E-04 & 1.99  \\\hline
    \end{tabular}%
  \caption{Superconvergence for the scalar elliptic problem using $\alpha=(1,0,0,1)$.}
  \label{tab:elliptic0}%
\end{table}%

\begin{table}[htbp]
\small
  \centering 
    \begin{tabular}{|c|c|c|c|c|c|c|c|c|}
\hline
  & $\|u - \du\|_{0}$  &  rates     &$\|\du - \Piu u\|_{0}$&   rates      & $\|u - u_{1, h}^*\|_{0}$  &   rates     &$\|u - u_{2, h}^*\|_{0}$    &  rates\\\hline
$\cTh_3$ & 1.96E-02 & 1.95  & 1.99E-03 & 3.35  & 2.41E-03 & 3.42  &  2.22E-03 & 3.52\\\hline
$\cTh_4$ & 4.95E-03 & 1.98  & 1.43E-04 & 3.80  & 1.68E-04 & 3.84  & 1.49E-04 & 3.89 \\\hline
$\cTh_5$ & 1.24E-03 & 1.99  & 9.40E-06 & 3.92  & 1.10E-05 & 3.94  & 9.63E-06 & 3.96 \\\hline
$\cTh_6$ & 3.11E-04 & 2.00  & 6.02E-07 & 3.97  & 7.00E-07 & 3.97  & 6.11E-07 & 3.98  \\\hline
$\cTh_7$ & 7.78E-05 & 2.00  & 3.80E-08 & 3.98  & 4.42E-08 & 3.99  & 3.85E-08 & 3.99 \\\hline
    \end{tabular}%
  \caption{Superconvergence for the scalar elliptic problem using $\alpha=(2,1,1,2)$.}
  \label{tab:elliptic1}%
\end{table} 

\begin{table}[htbp] 
\small
  \centering 
    \begin{tabular}{|c|c|c|c|c|c|c|c|c|}
\hline
 & $\|u - \du\|_{0}$  &  rates     &$\|\du - \Piu u\|_{0}$  &   rates      & $\|u - u_{1, h}^*\|_{0}$  &   rates     &$\|u - u_{2, h}^*\|_{0}$    &  rates \\\hline
$\cTh_3$ & 2.17E-03 & 2.92  & 8.89E-05 & 4.42  & 3.48E-04 & 3.91  & 1.55E-04 & 3.78 \\\hline
$\cTh_4$ & 2.75E-04 & 2.98  & 3.20E-06 & 4.80  & 1.34E-05 & 4.70  &  6.07E-06 & 4.68  \\\hline
$\cTh_5$ & 3.45E-05 & 2.99  & 1.06E-07 & 4.92  & 4.50E-07 & 4.90  &  2.04E-07 & 4.89 \\\hline
$\cTh_6$ & 4.31E-06 & 3.00  & 3.39E-09 & 4.97  & 1.44E-08 & 4.96  & 6.56E-09 & 4.96  \\\hline
$\cTh_7$ & 5.39E-07 & 3.00  & 1.07E-10 & 4.98  & 4.57E-10 & 4.98  & 2.08E-10 & 4.98 \\\hline
    \end{tabular}%
  \caption{Superconvergence for the scalar elliptic problem using $\alpha=(3,2,2,3)$.}
  \label{tab:elliptic2}%
\end{table}%

Table \ref{tab:elliptic0} - \ref{tab:elliptic2} record the errors $\|u - \du\|_{0}$, $\|\Piu u - \du\|_{0}$,  $\|u - u_{1, h}^*\|_{0}$, $\|u - u_{2, h}^*\|_{0}$  and the corresponding convergence rates for the aforementioned four-field formulations \eqref{XGdiv} with $\Piu=\Piu^0$ in \eqref{postdef11} and \eqref{postdef12}. It reveals in these tables that $\|\Piu u - \du\|_{0}$,  $\|u - u_{1, h}^*\|_{0}$ and $\|u - u_{2, h}^*\|_{0}$ converge at 
the same rate 2.00 if $\alpha=(1,0,0,1)$, 4.00 if $\alpha=(2,1,1,2)$ and 5.00 if $\alpha=(3,2,2,3)$,
 which coincides with the analysis in Theorem \ref{th:super1} and Theorem \ref{th:post1}. The comparison between $\|u - u_{1, h}^*\|_{0}$ and $\|u - u_{2, h}^*\|_{0}$ shows that the postprocessing approximations $u_{2, h}^*$ admit a slightly higher accuracy than $u_{1, h}^*$.  It is analyzed in \cite{XGHong} that the four-field formulation \eqref{XGdiv} with 
 $\eta= \tau^{-1}$ 
 and $\gamma=0$ is hybridizable. For this formulation, the postprocess technique \eqref{postdef12} with $\ttp=\hp$ is better than the other one \eqref{postdef11} in two aspects, one is  the higher accuracy of $u_{2, h}^*$ and the other one is that there is no need to solve $\dph$ from the reduced  formulation.

\subsection{Example 2: linear elasticity problems}
We consider the linear elasticity problem \eqref{model:elasticity} on the unit square $\Om=(0,1)^2$ with the exact displacement 
$$
\cu= (\sin (\pi x)\sin(\pi y), \sin (\pi x)\sin(\pi y))^T,
$$
and set $f$ and $g$ are chosen corresponding to the above exact solution of \eqref{model:elasticity} with $E=1$ and $\nu=0.4$.
The domain is partitioned by uniform triangles. The level one triangulation $\mathcal{T}_1$ consists of two right triangles, obtained by cutting the unit square with a north-east line. Each triangulation $\mathcal{T}_i$ is refined into a half-sized triangulation uniformly, to get a higher level triangulation $\mathcal{T}_{i+1}$. For this numerical tests, fix the parameters $\rho_1=\rho_2=\gamma=1$.  

\begin{table}[htbp]
  \centering 
    \begin{tabular}{|c|c|c|c|c|c|c|}
\hline
 & $\|\cu - \due\|_{0}$  &  rates     &$\|\due - \Piue\cu\|_{0}$ &   rates      & $\|\cu - \due^*\|_{0}$  &   rates     \\\hline
$\cTh_1$& 9.87E-02 &   -    & 2.90E-02 &   -    & 3.68E-02 &  -\\\hline
$\cTh_2$& 2.37E-02 & 2.06  & 5.65E-03 & 2.36  & 6.83E-03 & 2.43 \\\hline
$\cTh_3$& 3.08E-03 & 2.94  & 3.71E-04 & 3.93  & 3.74E-04 & 4.19 \\\hline
$\cTh_4$& 3.89E-04 & 2.99  & 1.51E-05 & 4.62  & 1.43E-05 & 4.71 \\\hline
$\cTh_5$& 4.87E-05 & 3.00  & 5.17E-07 & 4.87  & 4.79E-07 & 4.90 \\\hline
$\cTh_6$& 6.10E-06 & 3.00  & 1.67E-08 & 4.95  & 1.54E-08 & 4.96 \\\hline
    \end{tabular}%
  \caption{Superconvergence for the elasticity problem with $\alpha=(3,2,2,3)$}
  \label{tab:elasticity1}%
\end{table}%

\begin{table}[htbp]
  \centering 
    \begin{tabular}{|c|c|c|c|c|c|c|}
\hline
 & $\|\cu - \due\|_{0}$  &  rates     &$\|\due - \Piue\cu\|_{0}$  &   rates      & $\|\cu - \due^*\|_{0}$  &   rates     \\\hline
$\cTh_1$& 7.17E-02 &   -    & 2.51E-02 &   -    & 3.47E-02 & - \\\hline
$\cTh_2$& 4.12E-03 & 4.12  & 7.52E-04 & 5.06  & 8.82E-04 & 5.30 \\\hline
$\cTh_3$& 2.69E-04 & 3.94  & 2.27E-05 & 5.05  & 2.28E-05 & 5.27 \\\hline
$\cTh_4$& 1.70E-05 & 3.98  & 5.76E-07 & 5.30  & 5.29E-07 & 5.43 \\\hline
$\cTh_5$& 1.06E-06 & 4.00  & 1.13E-08 & 5.68  & 1.01E-08 & 5.71 \\\hline
    \end{tabular}%
  \caption{Superconvergence for  the elasticity problem with $\alpha=(4,3,3,4)$}
  \label{tab:elasticity2}%
\end{table}%
 
Table \ref{tab:elasticity1} and \ref{tab:elasticity2} list the errors $\|\cu - \due\|_{0}$, $\|\Piue\cu - \due\|_{0}$, $\|\cu - \due^*\|_{0}$ and the corresponding convergence rates of the discontinuous Galerkin formulation \eqref{XGdiv2} with $k\ge n$ for elasticity problem \eqref{model:elasticity}. It is shown that both $\|\Piue\cu - \due\|_{0}$ and $\|\cu - \due^*\|_{0}$ of the discontinuous Galerkin formulation \eqref{XGdiv2} with $k=2$ and $k=3$ converge at the rates 5.00 and 6.00, respectively. This verifies that error estimates in Theorem \ref{th:post2}.
 
We also test the postprocessing scheme \eqref{post:elasticity}  on the formulation \eqref{XGdiv2} with $k<n$, namely,
$$
\alpha=(1,0,0,1)\quad \mbox{and}\quad (2,1,1,2),
$$
where the results are listed in Table \ref{tab:elasticity3} and \ref{tab:elasticity4}, respectively. It shows that postprocessing solution $\due^*$ converges at the same rate as the finite element solution $\due$, which is $k+1$ for the case $k<n$. This implies that there is no such $H({\rm div})$-conforming projection that admits the commuting diagram \eqref{pro:commuting}.
\begin{table}[htbp]
  \centering 
    \begin{tabular}{|c|c|c|c|c|c|c|}
\hline
 & $\|\cu - \due\|_{0}$  &  rates     &$\|\due - \Piue\cu\|_{0}$  &   rates      & $\|\cu - \due^*\|_{0}$  &   rates     \\\hline
$\cTh_2$& 4.25E-01 &- & 2.50E-01 & - & 3.45E-01 & - \\\hline
$\cTh_3$& 2.13E-01 & 0.99  & 1.12E-01 & 1.16  & 1.29E-01 & 1.42 \\\hline
$\cTh_4$& 1.00E-01 & 1.09  & 3.89E-02 & 1.53  & 4.61E-02 & 1.48 \\\hline
$\cTh_5$& 4.83E-02 & 1.05  & 1.41E-02 & 1.46  & 1.84E-02 & 1.33 \\\hline
$\cTh_6$& 2.39E-02 & 1.02  & 6.06E-03 & 1.22  & 8.39E-03 & 1.13 \\\hline
$\cTh_7$& 1.19E-02 & 1.00  & 2.88E-03 & 1.07  & 4.08E-03 & 1.04 \\\hline
$\cTh_8$& 5.96E-03 & 1.00  & 1.42E-03 & 1.02  & 2.03E-03 & 1.01 \\\hline
    \end{tabular}%
  \caption{Superconvergence for  the elasticity problem with $\alpha=(1,0,0,1)$}
  \label{tab:elasticity3}%
\end{table}%

\begin{table}[htbp]
  \centering 
    \begin{tabular}{|c|c|c|c|c|c|c|}
\hline
 & $\|\cu - \due\|_{0}$  &  rates     &$\|\due - \Piue\cu\|_{0}$  &   rates      & $\|\cu - \due^*\|_{0}$  &   rates     \\\hline
$\cTh_1$& 4.74E-01 &    -   & 3.08E-01 &   -    & 4.00E-01 & - \\\hline
$\cTh_2$& 1.11E-01 & 2.09  & 4.11E-02 & 2.90  & 5.27E-02 & 2.92 \\\hline
$\cTh_3$& 2.85E-02 & 1.97  & 7.20E-03 & 2.51  & 8.57E-03 & 2.62 \\\hline
$\cTh_4$& 7.22E-03 & 1.98  & 1.77E-03 & 2.02  & 1.89E-03 & 2.18 \\\hline
$\cTh_5$& 1.82E-03 & 1.99  & 4.56E-04 & 1.96  & 4.64E-04 & 2.03 \\\hline
$\cTh_6$& 4.55E-04 & 2.00  & 1.16E-04 & 1.98  & 1.16E-04 & 2.00 \\\hline
$\cTh_7$& 1.14E-04 & 2.00  & 2.91E-05 & 1.99  & 2.91E-05 & 2.00 \\\hline
    \end{tabular}%
  \caption{Superconvergence for  the elasticity problem with $\alpha=(2,1,1,2)$}
  \label{tab:elasticity4}%
\end{table}%

\section*{Acknowledgements}
The author wishes to  thank the partial support from the Center for Computational Mathematics and Applications, the Pennsylvania State University.
The author would also like to thank Professor Jinchao Xu from the Pennsylvania State University and Professor Jun Hu from the Peking University for their guidance and helpful suggestions pertaining to this work.

%

\bibliographystyle{siamplain}
\bibliography{references}
\end{document}